\theoremstyle{plain}
\newtheorem{thm}{Theorem}[section]
\newtheorem{lem}[thm]{Lemma}
\newtheorem{prop}[thm]{Proposition}
\theoremstyle{definition}
\newtheorem{defin}[thm]{Definition}
\newtheorem{rem}[thm]{Remark}
\newtheorem{conj}[thm]{Conjecture}
\newtheorem{cor}[thm]{Corollary}
\definecolor{purple}{rgb}{.5,0,1}
\newcommand{\R}{\mathbb{R}} %real numbers
\newcommand{\Z}{\mathbb{Z}} %integers
\newcommand{\N}{\mathbb{N}} %natural numbers
\renewcommand{\limsup}{\varlimsup}
\newcommand{\calA}{{\mathcal{A}}}
\newcommand{\boldtau}{{\boldsymbol{\tau}}}
\newcommand{\boldphi}{{\boldsymbol{\phi}}}
\newcommand{\set}[1]{{\left\{#1\right\}}}
\newcommand{\Sub}{{\mathrm{Sub}}}
\numberwithin{equation}{section}
\newcommand{\TT}{\mathbb{T}}
\begin{document}

\title[Zero Measure Spectrum for Multi-Frequency Operators]{Zero Measure Spectrum for Multi-Frequency Schr\"odinger Operators}

\begin{abstract}
%We show that on the two-dimensional torus, Lebesgue almost every translation admits a natural coding such that the associated subshift satisfies the Boshernitzan criterion.
Building on works of Berth\'e--Steiner--Thuswaldner and Fogg--Nous we show that on the two-dimensional torus, Lebesgue almost every translation admits a natural coding such that the associated subshift satisfies the Boshernitzan criterion. As a consequence we show that for these torus translations, every quasi-periodic potential can be approximated uniformly by one for which the associated Schr\"odinger operator has Cantor spectrum of zero Lebesgue measure. We also describe a framework that can allow this to be extended to higher-dimensional tori. %Our approach is enabled by
\end{abstract}

\author[J.\ Chaika]{Jon Chaika}
\address{Department of Mathematics, University of Utah, Salt Lake City, UT 84112, USA}
\email{chaika@math.utah.edu}
\thanks{J.C.\ was supported in part by the Simons foundation, the Warnock chair, and NSF Grant DMS--1452762}

\author[D.\ Damanik]{David Damanik}
\address{Department of Mathematics, Rice University, Houston, TX~77005, USA}
\email{damanik@rice.edu}
\thanks{D.D.\ was supported in part by NSF grant DMS--1700131 and by an Alexander von Humboldt Foundation research award}

\author[J.\ Fillman]{Jake Fillman}
\address{Department of Mathematics, Texas State University, San Marcos, TX 78666, USA}
\email{fillman@txstate.edu}
\thanks{J.F.\ was supported in part by Simons Collaboration Grant \#711663}

\author[P.\ Gohlke]{Philipp Gohlke}
\address{Fakult\"at f\"ur Mathematik, Universit\"at Bielefeld,  Postfach 100131, 33501 Bielefeld, Germany}
\email{pgohlke@math.uni-bielefeld.de}
\thanks{P.G. acknowledges support by the German Research Foundation (DFG) via
the Collaborative Research Centre (CRC 1283)}

\maketitle

\section{Introduction}

This work addresses the persistent occurrence of Cantor spectrum of zero Lebesgue measure in the class of discrete one-dimensional Schr\"odinger operators with generalized quasi-periodic potentials, where the underlying torus has dimension strictly greater than one.

To motivate this problem, let us describe the setting and recall some of the known results. Fix a dimension $d \in \N$ and consider $\alpha \in \TT^d := \mathbb{R}^d/\mathbb{Z}^d$ that is such that the translation $R_\alpha : \TT^d \to \TT^d$, $\omega \mapsto \omega + \alpha$ is minimal. If $g : \TT^d \to \R$ is bounded and measurable, we can consider, for each $\omega \in \TT^d$, the discrete Schr\"odinger operator
$$
[H_{\alpha,g,\omega} \psi](n) = \psi(n+1) + \psi(n-1) + g(\omega + n \alpha) \psi(n)
$$
in $\ell^2(\Z)$. We call such an operator a \emph{generalized quasi-periodic Schr\"odinger operator}. Within this class of sampling functions, one distinguishes several standard regularity classes and observes that the spectral properties of the operators in question depend quite significantly on the chosen regularity class. Standard examples are given by continuous $g$ (this corresponds precisely to the class of \emph{quasi-periodic Schr\"odinger operators}), H\"older continuous $g$, $g$ that are differentiable a certain finite number of times, smooth (i.e., infinitely differentiable) $g$, and analytic $g$.

One is interested in the spectrum and the spectral type. By standard arguments involving the ergodicity of Lebesgue measure with respect to $R_\alpha$, there is a compact set $\Sigma_{\alpha,g}$ such that for Lebesgue almost every $\omega \in \TT^d$, the spectrum of $H_{\alpha,g,\omega}$ is equal to $\Sigma_{\alpha,g}$. Similarly, the spectral type of $H_{\alpha,g,\omega}$ is also Lebesgue-almost surely independent of $\omega$. As we will focus on the spectrum in this paper, we will not go into further details regarding the spectral type and refer the reader to the surveys \cite{D17, MJ17} for background and more information.

The almost sure spectrum $\Sigma_{\alpha,g}$ can have various topological and measure-theoretic properties. It can be a Cantor (i.e., perfect and nowhere dense) set, but it can also be a finite union of non-degenerate compact intervals. The Cantor spectra that occur can have both positive and zero Lebesgue measure. Among those that have zero Lebesgue measure, examples are known with small, and even zero, Hausdorff dimension.

Roughly speaking, when $d = 1$, it is well known how to produce examples with zero Lebesgue measure \cite{DL06a, DL06b} and even zero Hausdorff dimension \cite{LS16}. On the other hand, when $d > 1$, examples are known where the spectrum is a finite union of intervals, and it is (essentially)\footnote{There is a way to recast some known results for primitive substitution subshifts in terms of codings of torus translations; see, for example, \cite{R} for the case of the Tribonacci substitution and \cite{AI} for more examples.} open how to produce spectra of zero Lebesgue measure. The present paper develops a way of producing many such examples. Indeed they are ``ample'' in a way we will make precise.

Since we used zero Lebesgue measure and non-Cantor structure to distinguish between the two cases $d = 1$ and $d > 1$ in the previous paragraph, let us point out that proving the genericity of Cantor spectrum in $C(\TT^d)$ for any fixed minimal translation $R_\alpha$ (without supplying any information about the Lebesgue measure of the set) has a proof that works simultaneously for all values of $d \in \N$; see \cite{ABD09, ABD12}. On the other hand, in the analytic category, Cantor spectrum is typical when $d = 1$ (the literature is extensive; see, e.g., \cite{E91, GS11, P04}, and the surveys \cite{D17, MJ17} for a more complete list), while it is not typical when $d > 1$ (at least in the large coupling regime \cite{GSV19}).

To summarize, the mechanisms leading to Cantor spectrum of zero Lebesgue measure in the context of generalized quasi-periodic Schr\"odinger operators are quite well understood in the one-frequency case ($d = 1$), but so far they are poorly understood in the multi-frequency case ($d > 1$). We will discuss a mechanism here that works in the multi-frequency case, which leads to a class of examples that is in some ways as rich and ``ample'' as the existing work in the one-frequency case.

\begin{defin}
A function $g : \TT^d \to \R$ is called \emph{elementary} if it is measurable and takes finitely many values. The set of elementary functions $g : \TT^d \to \R$ is denoted by $\mathcal{E}(\TT^d)$. A subset of $\mathcal{E}(\TT^d)$ is called \emph{ample} if its $\|\cdot\|_\infty$-closure in $L^\infty(\TT^d)$ contains $C(\TT^d)$.
\end{defin}

\begin{thm}\label{t.mfqpsoapplication}
Let $d = 2$. Then, for Lebesgue almost every $\alpha \in \TT^d$, the set
$$
\mathcal{Z}_\alpha = \{ g \in \mathcal{E}(\TT^d) :  \Sigma_{\alpha,g} \text{ is a Cantor set of zero Lebesgue measure} \}
$$
is ample.
\end{thm}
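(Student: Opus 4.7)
The plan is to leverage the combinatorial--dynamical input from Berth\'e--Steiner--Thuswaldner and Fogg--Nous advertised in the abstract: for Lebesgue a.e.\ $\alpha \in \TT^2$, one obtains a finite measurable partition $\mathcal{P}_\alpha = \{P_1,\dots,P_k\}$ of $\TT^2$ whose natural coding $c_\alpha : \TT^2 \to \{1,\dots,k\}^{\Z}$, defined by $c_\alpha(\omega)_n = i$ if and only if $\omega + n\alpha \in P_i$, produces a minimal subshift $\Omega_\alpha$ that satisfies the Boshernitzan criterion and realizes $R_\alpha$ as a measurable isomorphism. I fix such an $\alpha$ throughout.

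If $g \in \mathcal{E}(\TT^2)$ is constant on every cell of $\mathcal{P}_\alpha$, then $n \mapsto g(\omega+n\alpha)$ is a letter-by-letter factor of $c_\alpha(\omega)$, and the resulting subshift still satisfies Boshernitzan. The standard Damanik--Lenz-type implementation of Boshernitzan's criterion for Schr\"odinger operators then delivers that $\Sigma_{\alpha,g}$ is a Cantor set of zero Lebesgue measure, so $g \in \mathcal{Z}_\alpha$. To upgrade to ampleness, I would refine dynamically, setting $\mathcal{P}_\alpha^{(N)} = \bigvee_{j=0}^{N-1} R_\alpha^{-j}\mathcal{P}_\alpha$; any elementary $g$ that is constant on atoms of $\mathcal{P}_\alpha^{(N)}$ produces potentials which are sliding-block factors (of window length $N$) of $c_\alpha(\omega)$, equivalently letter-factors of the $N$-block recoding of $\Omega_\alpha$. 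Higher-block recoding preserves the Boshernitzan property (minimal cylinder masses only shift in their length-index by a bounded constant), so every such $g$ again lies in $\mathcal{Z}_\alpha$.

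To conclude, given $f \in C(\TT^2)$ and $\epsilon > 0$, I would pick by uniform continuity a $\delta>0$ with $|f(\omega)-f(\omega')|<\epsilon$ whenever $\|\omega-\omega'\|<\delta$. Provided $N$ can be chosen so that every atom of $\mathcal{P}_\alpha^{(N)}$ has diameter below $\delta$, the elementary function assigning to each atom $P$ the value $f(\omega_P)$ at a distinguished representative $\omega_P \in P$ lies in $\mathcal{Z}_\alpha$ and satisfies $\|f-g\|_\infty \le \epsilon$, giving ampleness. The remaining step, which I expect to be the main obstacle, is precisely the uniform diameter shrinking of $\mathcal{P}_\alpha^{(N)}$. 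This is the geometric content of the BST/Fogg--Nous framework: the a.e.\ injectivity of the factor map $\Omega_\alpha \to \TT^2$ immediately yields that cylinders of length $n$ correspond to atoms of $\mathcal{P}_\alpha^{(n)}$ collapsing to points a.e., and one must promote a.e.\ shrinking to a uniform sup-diameter estimate. For the polygonal (or Rauzy-fractal boundary) partitions produced by the BST construction I would attempt this upgrade through a quantitative equidistribution argument for translates of $\partial\mathcal{P}_\alpha$ under $R_\alpha$, combined with a covering estimate showing that any disk of radius $\delta$ in $\TT^2$ must be crossed by some translate of the boundary once $N$ is sufficiently large. Failing a clean uniform statement, a fallback is to introduce additional dynamically compatible cuts into $\mathcal{P}_\alpha$ that leave the Boshernitzan property intact while forcing the translates to fill $\TT^2$ at the required scale.
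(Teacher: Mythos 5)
Your proposal follows essentially the same route as the paper: fix an $\alpha$ whose natural coding (via the Cassaigne--Selmer/BST machinery) satisfies Boshernitzan, observe that elementary functions constant on atoms of the refined partitions $\bigvee_{j=0}^{N-1}R_\alpha^{-j}\mathcal{P}_\alpha$ correspond to locally constant sampling functions on the subshift, invoke the Damanik--Lenz theorem, and conclude ampleness by uniform continuity once the atoms have small diameter. This is exactly the paper's Proposition~\ref{prop:locconstAmple} combined with Proposition~\ref{PROP:T^2-codings} and Theorem~\ref{t.BimpliesZM}.

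The one step you flag as the main obstacle---promoting almost-everywhere shrinking of the atoms to a uniform sup-diameter estimate---does not require the quantitative equidistribution argument or the extra cuts you propose, and that detour would be the wrong place to invest effort. The definition of a natural coding used here demands that $\bigcap_{n}\overline{\bigcap_{k=0}^{n}R_\alpha^{-k}\mathring{\mathcal F}_{x_k}}$ be a single point for \emph{every} $x\in X$, not merely almost every one. For each fixed $x$ this is a decreasing sequence of nonempty compact sets with singleton intersection, so its diameters tend to $0$; uniformity over $x$ then follows from compactness of $X$ (if $\mathrm{diam}(\mathcal{F}_{x^{(m)}_0\cdots x^{(m)}_{n_m}})\ge\delta_0$ with $n_m\to\infty$, pass to a limit point $x$ of the $x^{(m)}$ and use nestedness to contradict the singleton property at $x$). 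So the ``uniform diameter shrinking'' is already packaged into the hypothesis you are granted, and your construction goes through as written. Two smaller points: you should also rule out the degenerate cases in the Damanik--Lenz dichotomy---the approximating elementary function must be non-constant (easily arranged within the same $\varepsilon$), and the resulting potentials must be aperiodic, which the paper gets from minimality of $R_\alpha$ together with the fact that the level sets of these functions have nonempty interior.
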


\begin{rem}
(a) In the case $d = 1$, this is a result of Damanik-Lenz \cite{DL06a, DL06b}. Specifically, it follows by combining \cite[Theorem~2]{DL06a} and \cite[Theorem~10]{DL06b}. Actually, in this case, the full measure set of $\alpha \in \TT$ is explicit: it is the set of all irrational numbers. By contrast, the full measure set in Theorem~\ref{t.mfqpsoapplication} is not explicit.

(b) The fact that the result can be extended to a value of $d$ that is greater than one is not obvious, and indeed surprising, since the straightforward extension of \cite[Theorem~10]{DL06b} is known to fail, compare Remark~\ref{r.super-linear} below.

(c) The proof of Theorem~\ref{t.mfqpsoapplication} also employs \cite[Theorem~2]{DL06a}, but replaces the use of \cite[Theorem~10]{DL06b} by a more sophisticated process to verify the assumption of \cite[Theorem~2]{DL06a}.

(d) To the best of our knowledge, there is no known example of a quasi-periodic multi-frequency potential (i.e., $d > 1$ and $g \in C(\TT^d)$) so that the associated Schr\"odinger operator has zero-measure spectrum. It is unclear whether such an example exists. The fact that arbitrarily small $\|\cdot\|_\infty$ perturbations of an arbitrary $g \in C(\TT^d)$ can produce this effect is therefore interesting.

(e) We described the occurrence of zero-measure spectrum obtained via this route as ``persistent'' above, so let us explain what we mean by that. The $g \in \mathcal{E}(\TT^2)$ we obtain for which $\Sigma_{\alpha,g}$ is a Cantor set of zero Lebesgue measure are actually such that $\Sigma_{\alpha,\lambda g}$ is a Cantor set of zero Lebesgue measure for every $\lambda \in \R$ with $\lambda \not= 0$. Thus the phenomenon is persistent with respect to varying the coupling constant. This should be contrasted with the fact that any known $g \in C(\TT)$ for which $\Sigma_{\alpha,g}$ has been shown to have zero Lebesgue measure for suitable (irrational) $\alpha \in \TT$ is such that $\Sigma_{\alpha,\lambda g}$ has positive Lebesgue measure for every $\lambda \in \R$ with $|\lambda| \not= 1$. In other words, the zero-measure property is highly unstable with respect to a variation of the coupling constant in the quasi-periodic setting.

(f) We regard it as an interesting open problem to explore whether Theorem~\ref{t.mfqpsoapplication} can be extended to some larger values of $d$. Several components of our proof of Theorem~\ref{t.mfqpsoapplication} indeed do extend to values of $d$ greater than $2$. In the final section of this paper we comment on why our result is limited to the case $d = 2$ and point out the obstacles one needs to overcome if one wants to prove a result for some $d > 2$.
\end{rem}

The remainder of the paper is organized in the following way. We collect some necessary background in Section~\ref{sec:prelim}, including known results about multidimensional continued fraction algorithms and S-adic subshifts. In Section~\ref{sec:bosh}, we prove a sufficient criterion for an S-adic subshift to obey Boshernitzan's criterion for unique ergodicity.  Building on \cite{BST20}, we apply this criterion in Section~\ref{sec:2D} to deduce that Boshernitzan's criterion holds for certain subshifts arising from suitable two-dimensional continued fraction algorithms. We conclude the proof of Theorem~\ref{t.mfqpsoapplication} in Section~\ref{sec:mainproof}. Finally, we discuss the case $d \geq 3$ in Section~\ref{sec:higher}, including the overall strategy that one should implement as well as the obstacles that one must overcome in order to apply said strategy.

\section{Preliminaries} \label{sec:prelim}

\subsection{Multi-Dimensional Continued Fraction Algorithms}

\subsubsection{Motivation and Notation}

Continued fractions are a tool to understand the Diophantine properties of numbers and the dynamical properties of rotations. The theory has been best developed in dimension one where the Euclidean algorithm and its acceleration, the Gauss map, are incredibly useful. There are many generalizations of these algorithms to higher dimensions. For our purposes we will restrict our attention to the Cassaigne-Selmer algorithm and the Brun algorithm (the latter in the special case of four dimensions).

\subsubsection{The Cassaigne-Selmer Algorithm}

Denote $\mathbb{R}_+ =[0,\infty)$ and let
$$
\Delta = \Delta_3=\{(x_1,x_2,x_3)\in \mathbb{R}_+^3: x_1+x_2+x_3=1\}.
$$
The \emph{Cassaigne-Selmer algorithm} is given by
\begin{equation} \label{eq:CassSelDef}
T_C : \Delta \to \Delta \text{ by }T(x_1,x_2,x_3) = \begin{cases}(\frac{x_1-x_3}{x_1+x_2},\frac{x_3}{x_1+x_2},\frac{x_2}{x_1+x_2}) & \text{ if }x_1\geq x_3\\[2mm]
(\frac{x_2}{x_2+x_3},\frac{x_1}{x_2+x_3},\frac{x_3-x_1}{x_2+x_3}) & \text{ if }x_3>x_1.\end{cases}
\end{equation}
This algorithm was studied in \cite{CLL} for its connection to word combinatorics. There is an ergodic $T_C$-invariant probability measure $\nu_C$ on $\Delta$ which is equivalent to Lebesgue measure. Indeed, the Cassaigne-Selmer algorithm is conjugate to the Selmer algorithm \cite{CLL}. This algorithm is ergodic by \cite[Section 7]{Schw00}, whose argument presenting the proof of ergodicity of the fully sorted Selmer algorithm generalizes to show that the semi-sorted Selmer algorithm is ergodic.

\subsubsection{The Brun Algorithm for $d=4$}

Let
$$
\Delta = \Delta_4 = \{(x_1,x_2,x_3,x_4)\in \mathbb{R}_+^4: x_1+x_2+x_3 + x_4=1\}
$$
and, for $i,j \in \{1,2,3,4\}$, let
$$
\Delta(i,j) = \{ (x_1, x_2,x_3, x_4) : x_i \geq x_j \geq x_k \mbox{ for all } k \notin \{i,j\} \}.
$$
The \textit{Brun algorithm} $T_B \colon \Delta \to \Delta$ is defined for $(x_1, \ldots, x_4) \in \Delta(i,j)$ as
\[
T_B(x_1,\ldots, x_4)_k
= \begin{cases}
\frac{x_k}{1- x_j} & \text{ if } k \neq i, \\[2mm]
\frac{x_i - x_j}{1 - x_j} & \text{ if } k = i .
\end{cases}
\]
This map is well-defined almost everywhere on $\Delta$.
The ergodicity of this algorithm follows as in \cite{Schw00}. Hence, there exists an ergodic $T_B$-invariant probability measure $\nu_B$ on $\Delta$, which is equivalent to Lebesgue measure.%\marginpar{Jon: Check whether the invariant measure is infinite or finite}

\subsection{S-Adic Subshifts} \label{ssec:sadic}

Given a finite set $\calA$, give the full shift $\calA^\Z$ the product topology inherited from placing the discrete topology on each factor, and define the \emph{shift map} $S:\calA^\Z \to \calA^\Z$ by $[Sx](n) = x(n+1)$.
A \emph{subshift} over $\calA$ is a closed (hence compact) $S$-invariant subset $X\subseteq \calA^\Z$.

The free monoid will be denoted $\calA^* = \bigcup_{n= 0}^\infty \calA^n$; the unique element of $\calA^0$ is denoted $\varepsilon$ and called the empty word; the length of $u \in \calA^n$ is $|u| = n$.
Write
\begin{equation}\label{eq:subworddef}
\#_u(v) := \#\{ j : v_{j+1} v_{j+2} \cdots v_{j+|u|} = u\}
\end{equation}
for the number of times $u$ occurs in $v$, $u\triangleleft v$ if $\#_u(v)>0$, and  $L(u)$ for the set of all subwords of $u \in \calA^*$, $\calA^\N$ or $\calA^\Z$. For a subshift $X$, the \emph{language} of $X$ is
$$
L(X) : =\{u : u \in L(x) \text{ for some } x \in X\}.
$$
When $(X,S)$ is minimal, $L(X) = L(x)$ for every $x \in X$.

\begin{defin}\label{def:Bosh}
Let $(X,S)$ be a minimal subshift. We say that $(X,S)$ satisfies the \emph{Boshernitzan criterion} if there exist an $S$-invariant probability measure $\mu$, a constant $C>0$, and a sequence $n_1, n_2, \ldots \to \infty$  so that for all $w=w_1 \cdots w_{n_i} \in L(X)$,
$$
\mu( \{ x \in X : x_1 \cdots x_{n_i} = w \} > \frac C {n_i}.
$$
\end{defin}

A \emph{substitution} is an endomorphism $\tau: \calA^* \to \calA^*$, which is uniquely defined by its values on individual letters of $\calA$. We shall also assume that all substitutions are \emph{non-erasing} in the sense that $\tau(a)\neq \varepsilon$ for every $a \in \calA$, and denote the set of non-erasing substitutions on $\calA$ by $\Sub(\calA)$. For each $\tau \in \Sub(\calA)$, one associates the \emph{substitution matrix} $M = M_{\tau} \in \mathrm{End}(\Z^\calA)$, with entries given by
\[ M_\tau[a,b] = \#_a (\tau(b)).\]

 An \emph{S-adic system} over $\calA$ is defined by a choice of a \emph{directive sequence} $\boldtau = (\tau_n)_{n=0}^\infty$  of substitutions on $\calA$.  We will encounter products quite frequently, so, for $0\le m < n$, we write
 \[\tau_{[m,n]} = \tau_m \cdots \tau_n,\]
 with obvious conventions for open and half-open intervals.
 For $a \in\calA$, write $w_n(a) = \tau_{[0,n]}(a)$.
  Similarly, for the substitution matrices, we write $M_I = M_{\tau_I}$ for an interval $I$. Clearly, for $I = [m,n]$, one has
 \[M_{[m,n]} =  M_{\tau_m} M_{\tau_{m+1}} \cdots M_{\tau_{n}}.\]
  The \emph{language} associated to $\boldtau$ is
 \[L(\boldtau) := \set{ w \in \calA^* : w \triangleleft w_n(a) \text{ for some } a \in \calA \text{ and } n \in \N_0}. \]
 We also call this the set of allowed words.
It is easy to check that
 \[ X = X(\boldtau) := \set{ x \in \calA^\Z : L(x) \subseteq L(\boldtau)}, \]
 is a non-empty subshift, provided that
 \[\lim_{n \to \infty} \max_{a \in \calA} |w_n(a)| = \infty.\]
In this case, we call $X(\boldtau)$ the S-adic subshift generated by $\boldtau$.

\subsection{S-Adic Subshifts Related to Multi-Dimensional Continued Fractions}
Both the Cassaigne-Selmer algorithm and the Brun algorithm are of the form
\[
T \colon \Delta \to \Delta, \quad \bold{x} \mapsto \frac{A(\bold{x})^{-1} \bold{x}}{\|A(\bold{x})^{-1} \bold{x}\|_1}
\]
for some locally constant matrix valued function $A \colon \Delta \to \operatorname{GL}(d,\Z)$. Following \cite{BST20}, we select for each $\bold{x} \in \Delta$ a substitution $\varphi(\bold{x})$ on the alphabet $\mathcal A = \{1,\ldots, d\}$ such that $A(\bold{x})$ coincides with the substitution matrix $M_{\varphi(\bold{x})}$. In the case of the Cassaigne-Selmer algorithm this is achieved by
\[
\varphi(\bold{x}) =
\begin{cases}
\gamma_1 \quad \text{ if } x_1 \geq x_3,
\\ \gamma_2 \quad \text{ if } x_3 > x_1,
\end{cases}
\]
with the Cassaigne-Selmer substitutions
\[
\gamma_1 \colon \begin{cases}
1 \mapsto 1
\\ 2 \mapsto 13
\\ 3 \mapsto 2
\end{cases}
\quad \quad
\gamma_2 \colon \begin{cases}
1 \mapsto 2
\\ 2 \mapsto 13
\\ 3 \mapsto 3.
\end{cases}
\]
For the Brun algorithm we consider the class of substitutions
\[
\beta_{ij} \colon j \mapsto ij, \; k \mapsto k \mbox{ for } k \in \mathcal{A} \setminus \{j\}.
\]
for  $i,j \in \mathcal A = \{1,2,3,4\}$ and we set $\varphi(\bold{x}) = \beta_{ij}$ for $ \bold{x} \in \Delta(i,j)$.

Given a substitution selection $\varphi:\Delta \to \Sub(\calA)$, the orbit of a point $\bold{x} \in \Delta$ under the action of $T$ defines an S-adic system, called a \emph{substitutive realization} of $(\Delta,T,A)$, given by the directive sequence
\[
\boldphi(\bold{x}) = (\varphi(T^n \bold{x}))_{n=0}^\infty.
\]
The corresponding subshift is given by $(X(\boldphi(\bold{x})),S)$.
On the other hand, we relate to each point $\bold{x}$ in the $d$-dimensional simplex $\Delta$ a point on the torus $\TT^{d-1}$ by the map $\pi: \Delta \to \TT^{d-1}$, which denotes the projection to the first $d-1$ coordinates. Note that $\pi$ is not a surjective map but for
$$
\TT^{d-1}_{\Delta} = \{ t \in \TT^{d-1} : t_1 + \ldots + t_{d-1} \leq 1\},
$$
the map $\pi \colon \Delta \to \TT^{d-1}_{\Delta}$, $\bold{x} \mapsto \pi(\bold{x})$ is a bijection, identifying $\TT^{d-1} \cong [0,1)^{d-1}$ in the obvious fashion. Slightly abusing notation, we use the same symbol, $\pi$, to denote both maps.

\subsection{Natural Codings of Torus Translations}

For the $d$-dimensional torus $\TT^d$ and $\alpha \in \TT^d$, let $R_\alpha \colon \TT^d \to \TT^d$, $R_\alpha(\omega) = \omega + \alpha$ denote the torus translation associated to $\alpha$.

We present in the following a weaker version of the term \emph{natural coding} as defined in \cite{BST20}. This turns some of the results we cite from \cite{BST20} into mere corollaries which are, however, sufficient for our purposes. A collection $\mathscr{F} = \{\mathcal F_1, \ldots, \mathcal F_h\}$ is called a \emph{natural measurable} partition of $\TT^d$ if $\bigcup_{i=1}^h \mathcal F_i = \TT^d$, $\mathcal{F}_j \cap \mathcal{F}_k$ has zero measure for each $j \neq k$, and each $\mathcal F_i$ is measurable with dense interior and zero measure boundary. Given the map $R_\alpha$, the language associated with $\mathscr{F}$, denoted $L(\mathscr{F})$, is the set of finite words $w = w_0 \cdots w_n \in \{1,\ldots, h\}^{\ast}$ such that $\bigcap_{k=0}^{n} R_\alpha^{-k} \mathring{\mathcal F}_{w_k} \neq \emptyset$, where $\mathring{A}$ denotes the interior of $A$.
\begin{defin}
A subshift $(X,S)$ is called a \emph{natural coding} of $(\TT^d,R_\alpha)$ if its language coincides with the language of a natural measurable partition $\{\mathcal F_1,\ldots,\mathcal F_h\}$ and
$$
\bigcap_{n \in \N} \overline{\bigcap_{k=0}^{n} R_\alpha^{-k} \mathring{\mathcal F}_{x_k}}
$$
consists of a single point for every $x=(x_n)_{n \in \Z} \in X$.
\end{defin}

The following result concerning the Cassaigne-Selmer algorithm is essential for our analysis.

\begin{prop}\label{PROP:nat coding} \cite[Theorem 6.2]{BST20}
Let $\boldphi$ be the substitutive realization of the Cassaigne-Selmer algorithm. For $\nu_C$-almost every $\bold{x} \in \Delta$, the subshift $(X({\boldphi(\bold{x}))},S)$ is a natural coding of $(\TT^2, R_{\pi(\bold{x})})$.
\end{prop}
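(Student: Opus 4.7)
The plan is to realize $(X(\boldphi(\bold{x})),S)$ as a symbolic model for the toral translation $R_{\pi(\bold{x})}$ via an S-adic Rauzy fractal construction, generalizing the classical approach for Pisot substitutions. The first step is geometric: let $H \subset \R^3$ be the plane orthogonal to $(1,1,1)^T$, let $\ell \colon \calA^* \to \Z^\calA$ denote abelianization, and let $\rho_{\bold{x}} \colon \R^3 \to H$ be the projection along $\R \bold{x}$. For each $a \in \calA = \{1,2,3\}$ define a Rauzy piece
\[
\mathcal{R}(\bold{x}, a) = \overline{\{ \rho_{\bold{x}}(\ell(p)) : pa \text{ is a prefix of } w_n(b) \text{ for some } n \in \N, \, b \in \calA \}}.
\]
The heart of the proof is to show that, for $\nu_C$-a.e.\ $\bold{x}$, these pieces tile (up to measure zero) a fundamental domain of some lattice $\Lambda \subset H$, that the natural exchange sending $\rho_{\bold{x}}(y) \in \mathcal{R}(\bold{x},a)$ to $\rho_{\bold{x}}(y + \ell(a))$ modulo $\Lambda$ is well defined almost everywhere, and that a measurable bijection $H/\Lambda \to \TT^2$ intertwines this exchange with $R_{\pi(\bold{x})}$ while identifying the partition by Rauzy pieces with a natural measurable partition $\mathscr{F}$ whose language is $L(\boldphi(\bold{x}))$.

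The construction hinges on verifying, for $\nu_C$-typical $\bold{x}$, four almost-sure properties of $\boldphi(\bold{x})$: (a) primitivity of $\tau_{[m,n]}$ on a positive-density set of pairs $(m,n)$, yielding minimality and a well-defined limit language; (b) uniform C-balancedness of $L(\boldphi(\bold{x}))$, i.e., a uniform bound on $|\ell(u)-\ell(v)|$ over pairs $u,v$ of words of equal length, which bounds the Rauzy pieces; (c) a Pisot-type contractivity property, namely that the second Lyapunov exponent of the cocycle $n \mapsto M_{\varphi(T_C^n \bold{x})}$ over $(T_C,\nu_C)$ is strictly negative, so that $\rho_{\bold{x}} \circ M_{[0,n]}^{-1}$ contracts uniformly on $H$ and the Rauzy pieces have positive measure without collapsing; (d) the strong coincidence condition together with a geometric tiling coincidence, guaranteeing that the $\mathcal{R}(\bold{x},a)$ tile without overlap. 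Property (a) follows from ergodicity of $(T_C,\nu_C)$ and positivity of $\nu_C$ on every nontrivial cylinder in $\Delta$; (b) is extracted from the conjugacy of Cassaigne-Selmer with Selmer together with known almost-sure balancedness results for Selmer orbits; (d) reduces to showing that a fixed finite coincidence pattern appears along $\nu_C$-typical orbits, via Birkhoff's theorem.

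The hard part will be (c), the strict negativity of the second Lyapunov exponent of the Cassaigne-Selmer cocycle. This is the S-adic incarnation of the Pisot property, and without it the Rauzy pieces either degenerate or are unbounded and the entire geometric construction collapses. It must be established via genuine ergodic-theoretic input about the algorithm rather than a periodic-point eigenvalue computation, drawing on the ergodic theory of Selmer in \cite{Schw00} and its conjugacy with Cassaigne-Selmer in \cite{CLL}. Once (a)--(d) are in hand, the identification of $X(\boldphi(\bold{x}))$ with a natural coding of $R_{\pi(\bold{x})}$ follows a by-now standard Arnoux--Ito / Dumont--Thomas template: one verifies that the domain exchange on the Rauzy fractal carries a measurable conjugacy to $R_{\pi(\bold{x})}$ intertwining $S$ with the rotation, and that the cylinder partition at level $0$ yields a natural measurable partition of $\TT^2$ with language $L(\boldphi(\bold{x}))$, so that $(X(\boldphi(\bold{x})),S)$ is a natural coding in the sense of the definition above.
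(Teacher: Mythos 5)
This proposition is not proved in the paper: it is imported verbatim as \cite[Theorem 6.2]{BST20}, so there is no internal argument to compare yours against. What can be said is that your outline is a faithful reconstruction of the strategy behind the cited result, and it matches the road map the authors themselves spell out in Section~\ref{sec:higher}: an absolutely continuous ergodic invariant measure for the algorithm, strict negativity of the second Lyapunov exponent of the associated cocycle (the S-adic Pisot condition), and a seed/coincidence-type condition (a ``periodic Pisot point with positive range'' in the language of \cite{BST20}, which is the role your item (d) plays), all fed into an Arnoux--Ito/Dumont--Thomas Rauzy-fractal construction. You also correctly identify (c) as the crux --- it is precisely the ingredient whose unavailability for the Brun algorithm blocks the extension to $d=3$ in Section~\ref{sec:higher}. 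Two caveats. First, what you have written is a proof \emph{outline}, not a proof: each of (a)--(d) is itself a substantial theorem (the tiling/non-overlap statement in particular occupies most of \cite{BST20}), so this cannot substitute for the citation. Second, a small correction on sourcing: the negativity of the second Lyapunov exponent for the Cassaigne--Selmer algorithm is obtained, per the authors' discussion, via the conjugacy with the Selmer algorithm from \cite{CLL} together with \cite{Schw04} (resting on \cite{Schw01}), not from \cite{Schw00} alone, which supplies the ergodicity; your attribution of (b) to ``known balancedness results for Selmer orbits'' is also backwards --- in \cite{BST20} balancedness is \emph{derived} from the Pisot condition (c) rather than being an independent input.
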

Note that \cite[Theorems A and B]{FN20} are closely related results, that would have also been sufficient for our purposes.

\begin{rem}
\label{REM:T-Delta-to-T}
If $\mathscr{F} = \{\mathcal F_1, \ldots, \mathcal F_h\}$ is a natural measurable partition of $\TT^2$ and $M \in \mathrm{GL}(2,\Z)$, then the language generated by $R_\alpha$ on $\mathscr{F}$ coincides with the language generated by $R_{M\alpha}$ on the natural measurable partition $M\mathscr{F} =  \{M\mathcal F_1, \ldots, M \mathcal F_h\}$. In particular, if $(X,S)$ is a natural coding of $(\TT^2, R_\alpha)$, then it is also a natural coding of $(\TT^2, R_{M\alpha})$. In two dimensions we could simply take $M\alpha := -\alpha$, to obtain natural codings for (almost) all $\alpha \in \TT^2$ from codings for $\alpha \in \TT^2_{\Delta}$. For the more general $d$-dimensional cases, we still obtain $\TT^d$ from $\TT^d_{\Delta}$ via general linear transformations, compare \cite[Rem.~3.5]{BST20}.
\end{rem}
One would naturally like to obtain analogs of Proposition~\ref{PROP:nat coding} for higher-dimensional torus translations. For such translations, the Brun algorithm is a natural candidate to use for the associated continued fraction algorithm. However, in that case, there is a technical ingredient (namely negativity of the second Lyapunov exponent) which is currently unclear. We discuss this in more detail in Section~\ref{sec:higher}.

\subsection{Zero-Measure Spectrum via the Boshernitzan Criterion}

Given a finite alphabet $\calA$ and a subshift $X \subseteq \calA^\Z$, one can define Schr\"odinger operators in $\ell^2(\Z)$ by generating potentials which are obtained through real-valued sampling along the $S$-orbits of $X$. That is, if $f : X \to \R$ is given, we associate with each $x \in X$ the potential $V_x : \Z \to \R$ given by $V_x(n) = f(S^n x)$, $n \in \Z$. The Schr\"odinger operator $H_x$ in $\ell^2(\Z)$ is then given by
$$
[H_x \psi](n) = \psi(n+1) + \psi(n-1) + V_x(n) \psi(n).
$$
One typically restricts attention to locally constant functions $f$, that is, functions that depend on only finitely many entries of the input sequence $x$. Such functions are of course continuous, but in addition they preserve the finite-valuedness, which is crucial to many arguments in the study of these operators.

If $X$ is minimal and $f$ is locally constant, then a simple strong approximation argument shows that there is a compact set $\Sigma_{X,f} \subset \R$ such that $\sigma(H_x) = \Sigma_{X,f}$ for every $x \in X$. Obviously, a minimal subshift $X$ is finite if and only if every $V_x$ is periodic, and in this case $\Sigma_{X,f}$ is well known to be a union of finitely many non-degenerate compact intervals. Similarly, if $f$ is constant, the same conclusions hold. Ruling out these degenerate cases, it is an interesting question whether $\Sigma_{X,f}$ must have zero Lebesgue measure. In fact, Simon conjectured that this must be the case in complete generality, but this conjecture has been disproved in \cite{ADZ14}.

On the other hand, the Boshernitzan criterion turns out to be a sufficient condition \cite[Theorem~2]{DL06a}:
\begin{thm}\label{t.BimpliesZM}
If the minimal subshift $X$ satisfies the Boshernitzan criterion and $f$ is locally constant, then either all $V_x$ are periodic or the set $\Sigma_{X,f}$ is a Cantor set of zero Lebesgue measure.
\end{thm}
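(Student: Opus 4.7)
\emph{Plan.} The statement has a natural dichotomy based on the cardinality of $X$. If $X$ is finite, minimality forces it to be a single periodic $S$-orbit, so every $V_x$ is periodic (and the common spectrum is a union of finitely many non-degenerate compact intervals). Otherwise $X$ is minimal and aperiodic, and the task reduces to proving $\Sigma_{X,f}$ is a Cantor set of zero Lebesgue measure. The Cantor property (no isolated points, nowhere dense) follows from the zero-measure conclusion combined with the standard fact that aperiodic minimal base dynamics with non-constant locally constant sampling produces a common spectrum with no isolated points. So the heart of the matter is the Lebesgue measure bound.

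I would combine three inputs. First, Boshernitzan's theorem that his criterion implies unique ergodicity of $(X,S)$, together with a packing argument using $S$-invariance that upgrades the lower bound $\mu(\{x : x_1\cdots x_{n_i} = w\}) > C/n_i$ to a \emph{quantitative repetitivity} at scales $n_i$: every length-$n_i$ word in $L(X)$ occurs in every window of length $\lceil n_i/C \rceil$ of every $x\in X$. Second, Kotani's theorem for finite-valued potentials: since $V_x$ is finite-valued and aperiodic, the set $\{E : L(E)=0\}$ has Lebesgue measure zero, where $L$ is the Lyapunov exponent; in particular $L > 0$ Lebesgue-a.e.\ on $\Sigma_{X,f}$. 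Third, Johnson's theorem: $E \in \Sigma_{X,f}$ if and only if the Schr\"odinger cocycle at energy $E$ fails to be uniformly hyperbolic.

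Combining the last two, it remains to show that the ``pseudo-hyperbolic'' set $\Sigma_{X,f}\cap\{L > 0\}$—energies with positive Lyapunov exponent but failing uniform hyperbolicity—has Lebesgue measure zero. For this I would use the quantitative repetitivity to construct, for each $x\in X$, periodic approximants $V_x^{(i)}$ of period $O(n_i)$ obtained by extending a length-$n_i$ prefix of $V_x$ periodically; strong resolvent convergence then yields Hausdorff convergence $\Sigma^{(i)}\to\Sigma_{X,f}$, where $\Sigma^{(i)}$ is the Floquet spectrum of $V_x^{(i)}$. A decay estimate $|\Sigma^{(i)}| \to 0$, combined with upper semicontinuity of Lebesgue measure under Hausdorff convergence of compact sets, would then give $|\Sigma_{X,f}| = 0$.

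The main obstacle is precisely obtaining this quantitative decay $|\Sigma^{(i)}| \to 0$: it is false for arbitrary sequences of periodic potentials of growing period, so the argument must genuinely exploit that the approximants are drawn from the language of $X$. I would try to leverage Kotani's positivity of $L$ on $\Sigma_{X,f}$ and propagate it to a uniform lower bound for the Lyapunov exponent of the approximants outside small $E$-neighborhoods, and then convert this into a bound on total band widths via the Thouless formula. Bridging this gap between the purely combinatorial statement of Boshernitzan's criterion and an analytic bound on Floquet band widths is, I expect, the technical heart of the proof, and the point at which the one-frequency theory becomes genuinely more delicate than the periodic case.
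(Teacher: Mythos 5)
This theorem is not proved in the paper at all: it is quoted verbatim as \cite[Theorem~2]{DL06a}, so the relevant comparison is with the Damanik--Lenz argument. You correctly identify two of its three pillars --- Kotani theory for aperiodic finite-valued ergodic potentials (giving $|\{E : L(E)=0\}|=0$) and the Johnson--Lenz characterization of $\Sigma_{X,f}$ as the set of energies at which the transfer-matrix cocycle fails to be uniformly hyperbolic. But the decisive third pillar is missing. In \cite{DL06a}, Boshernitzan's criterion is used to prove \emph{uniform convergence in the multiplicative ergodic theorem} for locally constant $\mathrm{SL}(2,\R)$-cocycles over $(X,S)$; combined with Lenz's theorem that, over a uniquely ergodic base, a continuous cocycle whose exponent is positive and exists uniformly is automatically uniformly hyperbolic, this yields the exact identity $\Sigma_{X,f}=\{E : L(E)=0\}$, and Kotani finishes the proof. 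No periodic approximation and no band-width estimate appear anywhere.

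Your substitute for that step has two genuine defects. First, the semicontinuity claim is false in the direction you need: Hausdorff convergence $\Sigma^{(i)}\to\Sigma$ together with $|\Sigma^{(i)}|\to 0$ does not imply $|\Sigma|=0$ (the sets $\{k/i : 0\le k\le i\}$ converge in the Hausdorff metric to $[0,1]$ and each has measure zero); to make this work you would need the Hausdorff distances to decay faster than the reciprocal of the number of Floquet bands, which grows with the period. Second, the decay $|\Sigma^{(i)}|\to 0$ itself is precisely what is unavailable: total band-width estimates for periodic approximants are known only for special models with trace-map structure (Sturmian and certain substitution potentials), and there is no known route from the purely measure-theoretic hypothesis $\mu([w])>C/n_i$ to such an analytic bound --- avoiding this was the raison d'\^etre of \cite{DL06a}. (Your intermediate claim of quantitative repetitivity at scale $\lceil n_i/C\rceil$ is also not justified: unique ergodicity gives occurrence of each word in all sufficiently long windows, but with no rate controlled by Boshernitzan's constant.) So the proposal, as you yourself partly acknowledge, has a gap exactly at the point where the real proof routes through uniform hyperbolicity rather than periodic spectra.
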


\section{S-Adic Subshifts Satisfying the Boshernitzan Criterion} \label{sec:bosh}

Let $\boldtau = (\tau_k)_{k=0}^\infty$ be a directive sequence generating an S-adic system, $(X(\boldtau),S)$. Refer to Section~\ref{ssec:sadic} for definitions and notation. Our key auxiliary result is a sufficient criterion on $\boldtau$ for $(X(\boldtau),S)$ to satisfy Boshernitzan's criterion for unique ergodicity.
\begin{defin}
For $a,b \in \calA$, we say that $a$ \emph{precedes} $b$ \emph{at level $n$} if there are $m \in \N$ and $c \in \calA$ such that $ab \triangleleft \tau_{[n+1,n+m]}(c)$.  For an interval $I = [n+1,n+\ell]$, we say $\tau_I$ is a \emph{word builder} at level $n$ if, whenever $a$ precedes $b$ at level $n$, there is $c \in \calA$ such that $ab \triangleleft \tau_I(c)$.
\end{defin}

\begin{thm} \label{thm:S-adic Boshernitzan}
Suppose there exists a constant $N>0$ so that, for infintely many $n_0$, there exist $n_0 < n_1 < n_2 < n_3$ so that
\begin{enumerate}[label = {\rm(\alph*)}]
\item $M_{[n_0+1,n_1]}$ and $M_{[n_2+1,n_3]}$ are positive matrices,
\smallskip

\item $\tau_{[n_1+1,n_2]}$ is a word builder at level $n_1$,
\smallskip

\item $\max\{ \|M_{[n_0+1,n_1]}\|, \|M_{[n_1+1,n_2]}\|, \|M_{[n_2+1,n_3]}\|\} \leq N$.
\smallskip

\end{enumerate}
Then $(X(\boldtau),S)$ satisfies Boshernitzan's criterion.
\end{thm}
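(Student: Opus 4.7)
The plan is to use the repeated sandwich $[n_0+1,n_1], [n_1+1,n_2], [n_2+1,n_3]$ to produce a family of natural length scales $k^{(i)} \to \infty$ at which every word of length $k^{(i)}$ in $L(X)$ embeds as a factor of every level-$n_3$ image $\tau_{[0,n_3]}(d)$, while the block length $\max_d |\tau_{[0,n_3]}(d)|$ is simultaneously comparable to $k^{(i)}$ up to a constant $C_N$ depending only on $N$ and $|\calA|$. Combined with unique ergodicity of $(X(\boldtau),S)$, this will immediately give $\mu(\{x : x_1 \cdots x_{|u|} = u\}) \geq 1/(C_N |u|)$, which is precisely Boshernitzan's bound. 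Minimality and unique ergodicity follow at the outset from (a) plus (c) applied along the quadruples, via the standard theory of primitive S-adic systems; let $\mu$ denote the unique invariant probability measure.

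For the combinatorial core, fix one quadruple and let $k := \min_{b \in \calA} |\tau_{[0,n_1]}(b)|$. Any $u \in L(X) = L(\boldtau)$ with $|u| \leq k$ is a factor of $\tau_{[0,n_1]}(v_1 \cdots v_M)$ for some word $v_1 \cdots v_M$ legal at level $n_1$, and the length bound $|u| \leq k$ forces $u$ to sit entirely inside the image of a single consecutive pair $v_i v_{i+1}$. Thus $v_i$ precedes $v_{i+1}$ at level $n_1$; the word builder hypothesis (b) supplies some $c \in \calA$ with $v_i v_{i+1} \triangleleft \tau_{[n_1+1,n_2]}(c)$, and positivity of $M_{[n_2+1,n_3]}$ from (a) supplies $c \triangleleft \tau_{[n_2+1,n_3]}(d)$ for \emph{every} $d \in \calA$. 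Chaining these and applying $\tau_{[0,n_1]}$ yields the universal embedding $u \triangleleft \tau_{[0,n_3]}(d)$ for every $d$. To turn this into a measure estimate, write $\tau_{[0,m]}(a) = \tau_{[0,n_3]}(\tau_{[n_3+1,m]}(a))$ for $m \geq n_3$; each of the $|\tau_{[n_3+1,m]}(a)|$ blocks in the concatenation contributes at least one interior occurrence of $u$, so the frequency of $u$ in $\tau_{[0,m]}(a)$ is at least $1/L$, where $L := \max_d |\tau_{[0,n_3]}(d)|$. Letting $m \to \infty$ and using unique ergodicity yields $\mu(\{x : x_1 \cdots x_{|u|} = u\}) \geq 1/L$. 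A routine column-sum estimate on $M_{[0,n_3]} = M_{[0,n_0]} M_{[n_0+1,n_1]} M_{[n_1+1,n_2]} M_{[n_2+1,n_3]}$, using positivity in (a) to lower-bound entries coming from $M_{[n_0+1,n_1]}$ and the norm bound (c) to upper-bound entries in $M_{[n_1+1,n_2]} M_{[n_2+1,n_3]}$, gives $L \leq C_N \cdot k$, completing the Boshernitzan estimate along the sequence $n_i = k^{(i)}$.

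The main obstacle is the universal embedding step: the word builder condition (b) is precisely what chains a legal level-$n_1$ pair up into some $\tau_{[n_1+1,n_2]}(c)$, and the second positivity interval in (a) then spreads $c$ over all of $\calA$ at level $n_3$. The comparability $L \leq C_N k$ is a routine non-negative matrix manipulation, but it makes genuine use of both positivity (to keep the smaller block lengths from being too small) and the norm bound (to keep the larger block lengths from blowing up). Unique ergodicity enters only at the end, to reinterpret asymptotic word frequencies as $\mu$-measures; no recognizability of orbits $x \in X$ is needed because the entire argument is carried out inside the language $L(\boldtau)$.
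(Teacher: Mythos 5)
Your proposal is correct and follows essentially the same route as the paper's proof: the word builder together with the second positivity window gives that every allowed word of length $\min_b|\tau_{[0,n_1]}(b)|$ occurs in every block $\tau_{[0,n_3]}(d)$, and the first positivity window combined with the norm bound yields $\max_d|\tau_{[0,n_3]}(d)|\le N^3\min_b|\tau_{[0,n_1]}(b)|$, which is exactly the paper's chain of estimates via its Lemmas on word building and length comparison. The only cosmetic difference is that you pass from occurrence frequencies to $\mu$-measures via unique ergodicity, whereas the paper's argument works directly with an arbitrary invariant measure.
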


\begin{lem}\label{lem:concatenation}
If $\tau_{n+1}(a)=b_1 \ldots b_r$, then $w_{n+1}(a)=w_n(b_1) \ldots w_n(b_r)$.
\end{lem}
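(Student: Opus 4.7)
The plan is to unwind the definitions and exploit the fact that substitutions, being endomorphisms of the free monoid $\calA^\ast$, are multiplicative with respect to concatenation. Since any composition of such endomorphisms is again a monoid endomorphism, $\tau_{[0,n]} = \tau_0 \tau_1 \cdots \tau_n$ satisfies $\tau_{[0,n]}(uv) = \tau_{[0,n]}(u)\, \tau_{[0,n]}(v)$ for all $u,v \in \calA^\ast$.

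First, I would write $w_{n+1}(a) = \tau_{[0,n+1]}(a)$ and use the factorization $\tau_{[0,n+1]} = \tau_{[0,n]} \circ \tau_{n+1}$, which is immediate from the convention introduced above Lemma~\ref{lem:concatenation}. Thus
\[
w_{n+1}(a) \;=\; \tau_{[0,n]}\bigl(\tau_{n+1}(a)\bigr) \;=\; \tau_{[0,n]}(b_1 b_2 \cdots b_r),
\]
using the hypothesis $\tau_{n+1}(a) = b_1 \cdots b_r$.

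Next I would apply the morphism property of $\tau_{[0,n]}$ inductively on $r$ (or just cite it as the defining feature of a monoid endomorphism) to distribute over the concatenation:
\[
\tau_{[0,n]}(b_1 b_2 \cdots b_r) \;=\; \tau_{[0,n]}(b_1)\, \tau_{[0,n]}(b_2) \cdots \tau_{[0,n]}(b_r).
\]
Recognizing $\tau_{[0,n]}(b_i) = w_n(b_i)$ by definition of $w_n$, I obtain $w_{n+1}(a) = w_n(b_1) \cdots w_n(b_r)$, as required.

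This argument is entirely formal and relies on nothing beyond the conventions fixed in Section~\ref{ssec:sadic}; there is no genuine obstacle. The only point one must be careful about is that $\tau_{[0,n+1]} = \tau_{[0,n]} \circ \tau_{n+1}$ uses the ordering convention $\tau_{[m,n]} = \tau_m \tau_{m+1} \cdots \tau_n$ adopted in the paper, so that the outermost (leftmost) substitution in the composition is $\tau_0$ and the innermost is $\tau_{n+1}$. Given that ordering, the lemma is simply the statement that $\tau_{[0,n]}$, being a free-monoid morphism, preserves the factorization that $\tau_{n+1}$ produces from $a$.
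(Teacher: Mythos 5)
Your proposal is correct and is exactly the argument the paper has in mind; the paper simply states that the lemma ``follows immediately from the definition of $w_n$,'' and your unwinding of $w_{n+1}(a) = \tau_{[0,n]}(\tau_{n+1}(a))$ together with the morphism property of $\tau_{[0,n]}$ is that immediate verification, with the ordering convention handled correctly.
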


\begin{proof} This follows immediately from the definition of $w_n$.\end{proof}

\begin{cor}\label{cor:length bound}
Let $n,k \in \N$. If $M_{[n,n+k]}$ is a positive matrix, then, for all $a, a' \in \calA$, one has
$$
\frac{|w_{n+k}(a)|}{|w_{n+k}(a')|} \leq \underset{i,j,j'}{\max}\,\left\{\frac{M_{[n,n+k]}[i,j]}{M_{[n,n+k]}[i,j']} \right\}.
$$
\end{cor}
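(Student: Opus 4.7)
The plan is to use Lemma~\ref{lem:concatenation} to express $w_{n+k}(a)$ as a concatenation of the lower-level blocks $w_{n-1}(b)$, thereby writing $|w_{n+k}(a)|$ as a non-negative linear combination of the positive numbers $|w_{n-1}(b)|$ with coefficients read from a column of $M_{[n,n+k]}$; one then bounds the ratio of two such linear combinations by a ratio of coefficients via a standard weighted-average inequality.

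Concretely, writing $\tau_{[n,n+k]}(a) = b_1 b_2 \cdots b_r$ and iterating Lemma~\ref{lem:concatenation} along the interval $[n,n+k]$ (equivalently, using that $\tau_{[0,n-1]}$ is a morphism applied to $\tau_{[n,n+k]}(a)$), one obtains
$$w_{n+k}(a) \;=\; \tau_{[0,n-1]}\bigl(\tau_{[n,n+k]}(a)\bigr) \;=\; w_{n-1}(b_1)\, w_{n-1}(b_2) \cdots w_{n-1}(b_r).$$
Counting occurrences of each letter and invoking the definition $M_\tau[b,a] = \#_b(\tau(a))$ yields
$$|w_{n+k}(a)| \;=\; \sum_{b \in \calA} M_{[n,n+k]}[b,a]\,|w_{n-1}(b)|,$$
and likewise for $a'$.

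It remains to apply the elementary weighted-average inequality: for non-negative reals $\ell_b$ (not all zero) and strictly positive $m'_b$,
$$\frac{\sum_b m_b \ell_b}{\sum_b m'_b \ell_b} \;\le\; \max_{b} \frac{m_b}{m'_b}.$$
With $\ell_b = |w_{n-1}(b)|$, $m_b = M_{[n,n+k]}[b,a]$, $m'_b = M_{[n,n+k]}[b,a']$, this immediately produces
$$\frac{|w_{n+k}(a)|}{|w_{n+k}(a')|} \;\le\; \max_{b \in \calA} \frac{M_{[n,n+k]}[b,a]}{M_{[n,n+k]}[b,a']} \;\le\; \max_{i,j,j'} \frac{M_{[n,n+k]}[i,j]}{M_{[n,n+k]}[i,j']},$$
which is the asserted bound. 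The positivity hypothesis on $M_{[n,n+k]}$ is used only to ensure that every denominator $M_{[n,n+k]}[b,a']$ is at least $1$, so that the weighted-average step is applied with strictly positive $m'_b$. I do not anticipate any substantive obstacle: the corollary is a direct computational consequence of Lemma~\ref{lem:concatenation} paired with a one-line elementary inequality.
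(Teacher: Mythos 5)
Your proof is correct and follows essentially the same route as the paper's: iterate Lemma~\ref{lem:concatenation} to decompose $w_{n+k}(a)$ into lower-level blocks whose multiplicities are entries of a product of substitution matrices, then bound the ratio of the two resulting linear combinations by the maximal ratio of coefficients. The only (cosmetic) difference is that you descend to level $n-1$, so the coefficients are literally the entries of $M_{[n,n+k]}$ and line up exactly with the positivity hypothesis, whereas the paper stops at level $n$ and leaves the weighted-average step implicit.
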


\begin{proof}
For each $b \in \calA$, we apply Lemma \ref{lem:concatenation} $k$ times to write $w_{n+k}(b)$ as a concatenation of $w_n(a)$ for $a\in \calA$. For each $i,j,j' \in \calA$, the ratio of occurrences of $w_n(i)$ in such a decomposition of $w_{n+k}(j)$ and $w_{n+k}(j')$ is at most the right hand side.
%There is a unique way of decomposing a word $w_{n+k}(b)$ into inflation words of the form $w_n(a)$ with $a \in \mathcal A$. The ratio of occurrences of $w_n(i)$ in such a decomposition of $w_{n+k}(j)$ and $w_{n+k}(j')$ is at most the right hand side.
\end{proof}

\begin{lem}\label{lem:all words}
If $\tau_{[n+1,n+\ell]}$ is a word builder at level $n$, then every allowed word of length at most $\underset{c \in \mathcal{A}}{\min}\,  |w_n(c)|$ is a subword of $w_{n+\ell}(c)$ for some $c \in \calA$.
\end{lem}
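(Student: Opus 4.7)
The approach is to locate $u$ inside a ``late enough'' representative $w_m(a)$ with $m > n + \ell$, decompose that representative into $w_n$-blocks using Lemma \ref{lem:concatenation}, and then use the word builder property to relocate the (at most two) blocks overlapped by $u$ into a single $w_{n+\ell}(c)$.

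In more detail, fix $u \in L(\boldtau)$ with $|u| \leq L := \min_{c \in \calA} |w_n(c)|$. The first step is to produce $m > n + \ell$ and $a \in \calA$ with $u \triangleleft w_m(a)$. By definition of $L(\boldtau)$, $u$ appears as a subword of some $w_N(e)$, and this $N$ can be pushed past $n + \ell$ by using that the substitutions $\tau_k$ are non-erasing together with the growth of $\max_a |w_n(a)|$: any initial representative $w_N(e)$ is a subword of some $w_m(a)$ for arbitrarily large $m$. Having fixed such an $m$, iterating Lemma \ref{lem:concatenation} gives the factorization
\[
w_m(a) = w_n(b_1)\, w_n(b_2) \cdots w_n(b_r), \qquad \tau_{[n+1,m]}(a) = b_1 b_2 \cdots b_r.
\]

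Next, since $|u| \leq L \leq |w_n(b_j)|$ for every $j$, any occurrence of $u$ in $w_m(a)$ meets at most two adjacent blocks of this factorization, so there is an index $j$ with $u \triangleleft w_n(b_j)\, w_n(b_{j+1})$. The factor $b_j b_{j+1}$ appears in $\tau_{[n+1,m]}(a)$, so $b_j$ precedes $b_{j+1}$ at level $n$ in the sense of the definition. The word builder hypothesis then produces $c \in \calA$ with $b_j b_{j+1} \triangleleft \tau_{[n+1, n+\ell]}(c)$, and applying $\tau_{[0,n]}$ together with Lemma \ref{lem:concatenation} once more gives
\[
w_n(b_j)\, w_n(b_{j+1}) \triangleleft \tau_{[0,n]}\bigl(\tau_{[n+1,n+\ell]}(c)\bigr) = w_{n+\ell}(c),
\]
so $u \triangleleft w_{n+\ell}(c)$, as required.

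The step that needs the most care is the first one, namely ensuring that $u$ has a representative $w_m(a)$ with $m > n + \ell$ rather than only at small levels. This is a mild but genuine technical point, and the natural way to handle it is to observe that, thanks to the non-erasingness of the $\tau_k$, any witness at level $N$ can be lifted to witnesses at all larger levels by composing with further substitutions. All other steps are direct consequences of the concatenation lemma and the word builder definition.
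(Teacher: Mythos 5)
Your argument is the same as the paper's: decompose a high-level representative into $w_n$-blocks via Lemma~\ref{lem:concatenation}, observe that a word of length at most $\min_{c}|w_n(c)|$ straddles at most two consecutive blocks (a suffix of some $w_n(b_j)$ followed by a prefix of $w_n(b_{j+1})$, with $b_j$ preceding $b_{j+1}$ at level $n$), and invoke the word builder property to place $b_jb_{j+1}$, hence $w_n(b_j)w_n(b_{j+1})$, inside some $w_{n+\ell}(c)$. The paper's proof is exactly this, compressed into three sentences.

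One caveat on the step you yourself flag as delicate: non-erasingness alone does not let you lift a witness $w_N(e)$ with $u \triangleleft w_N(e)$ to arbitrarily large levels. To get $w_N(e) \triangleleft w_m(a)$ you need the letter $e$ to occur in $\tau_{[N+1,m]}(a)$ for some $a$, and a letter can fail to occur in any later image even when all substitutions are non-erasing (e.g.\ if every $\tau_k$ with $k>N$ sends all letters into $\{1\}^*$, the word $2$ lies in $L(\boldtau)$ via $w_N(2)$ but appears in no $w_m(a)$ with $m>N$). So a word of $L(\boldtau)$ witnessed only at low levels need not appear in any $w_{n+\ell}(c)$, and your justification of the lifting step does not close this. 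The paper's own proof makes the same silent assumption (``every word is a truncation of concatenations of $w_n(c)$''), and it is harmless where the lemma is applied, since Theorem~\ref{thm:S-adic Boshernitzan} supplies positivity of infinitely many products $M_{[n_0+1,n_1]}$, which forces every letter to recur in arbitrarily late images; to make the lifting step airtight you should appeal to that positivity (or restrict ``allowed word'' to $L(X(\boldtau))$) rather than to non-erasingness.
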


\begin{proof}
Every word is a truncation of concatenations of $w_n(c)$ as $c$ varies in $\mathcal{A}$. So every word of length at most $\underset{c \in \mathcal{A}}{\min}\,  |w_n(c)|$ is formed by concatenating a (possibly empty) suffix of $w_n(a)$ with a (possibly empty) prefix of $w_n(a')$ where $a$ precedes $a'$ at level $n$.
All such combinations appear in $w_{n+\ell}(c)$ for some $c \in \mathcal{A}$.
\end{proof}

\begin{lem}\label{lem:meas bound}
If $\tau_{[n+1,n+\ell]}$ is a word builder and $M_{[n+\ell+1,n+\ell+k]}$ is positive, then the measure of the cylinder set associated with any word of length $\min_{a\in \mathcal{A}}|w_n(a)|$ is at least
$$
\left(\underset{c \in \mathcal{A}}{\max}\, |w_{n+\ell+k}(c)|\right)^{-1}.
$$
\end{lem}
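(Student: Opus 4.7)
The plan is to prove that for \emph{any} $S$-invariant probability measure $\mu$ on $X(\boldtau)$, every allowed word $u$ of length at most $\min_{a \in \calA}|w_n(a)|$ satisfies $\mu(\{x : x_1 \cdots x_{|u|} = u\}) \geq L^{-1}$, where $L := \max_{c \in \calA}|w_{n+\ell+k}(c)|$. The strategy is to first show that $u$ occurs inside \emph{every} $w_{n+\ell+k}(c)$, then deduce a pointwise frequency bound for $u$ in every $x \in X(\boldtau)$, and finally integrate.

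First I would combine the word-builder hypothesis with Lemma~\ref{lem:all words} to produce some $c_0 \in \calA$ with $u \triangleleft w_{n+\ell}(c_0)$. Since $M_{[n+\ell+1, n+\ell+k]}$ is positive, the letter $c_0$ appears in $\tau_{[n+\ell+1, n+\ell+k]}(c)$ for every $c \in \calA$, so iterating Lemma~\ref{lem:concatenation} presents $w_{n+\ell+k}(c)$ as a concatenation of words $w_{n+\ell}(b)$ in which the block $w_{n+\ell}(c_0)$ appears. Consequently $u \triangleleft w_{n+\ell+k}(c)$ for every $c \in \calA$.

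Next, fix $x \in X(\boldtau)$ and take $N > \max_{a \in \calA}|w_{n+\ell+k-1}(a)|$. The factor $x_1 \cdots x_N$ lies in $L(\boldtau)$, so it is a subword of some $w_{m'}(a)$; the choice of $N$ forces $m' \geq n+\ell+k$. Iterating Lemma~\ref{lem:concatenation} down to level $n+\ell+k$, we write $w_{m'}(a) = w_{n+\ell+k}(c_1) \cdots w_{n+\ell+k}(c_r)$. The window $x_1 \cdots x_N$ loses at most $2L$ symbols to the two boundary blocks, so it spans at least $(N - 2L)/L$ interior blocks $w_{n+\ell+k}(c_i)$ in full, each contributing at least one occurrence of $u$ by the previous step. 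Dividing by $N$ and letting $N \to \infty$ yields
\[
\liminf_{N \to \infty} \frac{1}{N} \sum_{j=0}^{N-1} \mathbf{1}_{[u]}(S^j x) \geq \frac{1}{L}
\]
uniformly in $x \in X(\boldtau)$.

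Finally, Birkhoff's ergodic theorem together with bounded convergence gives
\[
\mu(\{x : x_1 \cdots x_{|u|} = u\}) = \int_X \lim_{N \to \infty} \frac{1}{N} \sum_{j=0}^{N-1} \mathbf{1}_{[u]}(S^j x) \, d\mu(x) \geq \frac{1}{L},
\]
completing the argument. The only delicate step is the hierarchical decomposition in paragraph three: one must verify that every sufficiently long factor of every $x$ is realized at a level $m' \geq n+\ell+k$, which is immediate once $N$ exceeds $\max_a |w_{n+\ell+k-1}(a)|$ since all $w_{m'}(a)$ with $m' < n+\ell+k$ are strictly shorter than $N$. Notably, no primitivity outside the window $[n+\ell+1, n+\ell+k]$ is needed, which is exactly what makes the estimate useful when assembling the Boshernitzan criterion in Theorem~\ref{thm:S-adic Boshernitzan}.
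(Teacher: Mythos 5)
Your argument is correct and takes essentially the same route as the paper: combine the word-builder property (via Lemma~\ref{lem:all words}) with positivity of $M_{[n+\ell+1,n+\ell+k]}$ to show every allowed word of the given length occurs in every $w_{n+\ell+k}(c)$, then pass from this to the measure bound through the decomposition of the language into level-$(n+\ell+k)$ blocks. The only difference is that you make explicit the final ``uniform lower frequency implies lower bound on the cylinder measure'' step (via Birkhoff/Fatou), which the paper leaves implicit in the sentence ``As our language is a concatenation of $w_{n+\ell+k}(c)$ \dots we have the claim.''
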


\begin{proof}
Every allowed word of length at most $\underset{a \in \mathcal{A}}{\min}\, |w_n(a)|$ appears at least once in every $w_{n+\ell+k}(c)$. Indeed, every $c$ appears in $\tau_{n+\ell+1} \cdots \tau_{n+\ell+k}(a)$ by the positivity of $M_{n+\ell+1} \cdots M_{n+\ell+k}$. So every $w_{n+\ell}(c)$ appears in every $w_{n+\ell+k}(a)$. By Lemma~\ref{lem:all words} this implies that every allowed word of length at most $\underset{a \in \mathcal{A}}{\min}\, |w_n(a)|$ appears at least once in every $w_{n+\ell+k}(c)$.

So the proportion of every allowed word in such blocks is at least $(\underset{c \in \mathcal{A}}{\max}\, |w_{n+\ell+k}(c)|)^{-1}$. As our language is a concatenation of $w_{n+\ell+k}(c)$ as $c$ varies in $\mathcal{A}$ we have the claim.
\end{proof}

\begin{proof}[Proof of Theorem~\ref{thm:S-adic Boshernitzan}]
This follows from Lemma \ref{lem:meas bound} and Corollary \ref{cor:length bound}. Indeed,
$$
\underset{c \in \mathcal{A}}{\max}\, |w_{n_3}(c)|
\leq N^2 \left(\underset{c \in \mathcal{A}}{\max}\, |w_{n_1}(c)|\right)
\leq N^3\left(\underset{c \in \mathcal{A}}{\min}\, |w_{n_1}(c)|\right).
$$
So we have that the measure of any cylinder of length
$ \underset{c \in \mathcal{A}}{\min}\, |w_{n_1}(c)|$ is at least $(N^3 \underset{c \in \mathcal{A}}{\min}\, |w_{n_1}(c)|)^{-1}$. Consequently, there exist infinitely many $r$ so that  we satisfy the Boshernitzan criterion with $C=(N^3r)^{-1}$. %the measure of any $r$-block is at least $(N^3r)^{-1}$.
\end{proof}

\begin{rem}\label{r.super-linear}
It is easy to see that any subshift satisfying the Boshernitzan criterion must have a complexity function that is linearly bounded on a subsequence. This in turn shows that for codings of higher-dimensional torus translations, care must be taken if there is to be any hope to generate subshifts satisfying the Boshernitzan criterion. Indeed, it is known that any coding of a minimal translation of $\TT^d$, $d \ge 2$, relative to a partition of $\TT^d$ into sufficiently nice sets has a super-linear lower bound; compare, for example, \cite{C,SW}.
\end{rem}

\section{2D Toral Translations} \label{sec:2D}

The substitution matrices associated to the Cassaigne--Selmer substitutions $\gamma_1$ and $\gamma_2$ are given by
\[
C_1 = \begin{bmatrix}
1 & 1 & 0\\
0 & 0 & 1\\
0 & 1 & 0
\end{bmatrix}
\quad \mbox{and} \quad
C_ 2 = \begin{bmatrix}
0 & 1 & 0 \\
1 & 0 & 0 \\
0 & 1 & 1
\end{bmatrix},
\]
respectively. Recall that $\nu_C$ denotes the $T_C$-ergodic measure on $\Delta$ which is equivalent to Lebesgue measure. For the remainder of this section, let $T = T_C$ and $\nu = \nu_C$.
The pushforward of Lebesgue measure on $\Delta$ under $\pi$ is equivalent to Lebesuge measure (and therefore to $\nu$) on $\TT^2_{\Delta}$. Hence, for almost all $\alpha \in \TT^2_{\Delta}$, the subshift $(X(\boldphi(\pi^{-1}(\alpha))),S)$ is a natural coding of $(\TT^2, R_\alpha)$ due to Proposition~\ref{PROP:nat coding}.

\begin{prop}
\label{PROP:T^2-codings}
For Lebesgue a.e.\ $\alpha \in \TT^2_{\Delta}$, the subshift $(X(\boldphi(\pi^{-1}(\alpha))),S)$ satisfies Boshernitzan's criterion. In particular, for almost every $\alpha \in \TT^2$, the toral translation $(\TT^2, R_\alpha)$ admits a natural coding that satisfies Boshernitzan's criterion.
\end{prop}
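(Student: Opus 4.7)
The plan is to verify the hypotheses of Theorem~\ref{thm:S-adic Boshernitzan} for the directive sequence $\boldphi(\bold{x}) = (\varphi(T_C^n \bold{x}))_{n \geq 0}$ attached to $\nu_C$-a.e.\ $\bold{x} \in \Delta$. Combined with Proposition~\ref{PROP:nat coding}, this will establish Boshernitzan's criterion for the natural codings of translations by $\alpha \in \TT^2_\Delta$. The ``in particular'' clause for all of $\TT^2$ then follows by applying Remark~\ref{REM:T-Delta-to-T} with an appropriate $M \in \mathrm{GL}(2,\Z)$ (for instance $M = -I$).

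The key step is to construct a fixed finite word $B = B^{(1)}\, B^{(2)}\, B^{(3)} \in \{\gamma_1,\gamma_2\}^K$, with lengths $K_i = |B^{(i)}|$ and $K = K_1 + K_2 + K_3$, with two properties. First, $M_{B^{(1)}}$ and $M_{B^{(3)}}$ are strictly positive matrices; a direct $3 \times 3$ calculation shows that, e.g., $(\gamma_1\gamma_2)^2 \gamma_1$ has positive substitution matrix, and one may take $B^{(1)} = B^{(3)} = (\gamma_1\gamma_2)^2 \gamma_1$. Second, $B^{(2)}$ is a \emph{universal word builder}: for every pair $(a,b) \in \calA^2$ that can arise in some Cassaigne--Selmer S-adic language, there exists $c \in \calA$ with $ab \triangleleft B^{(2)}(c)$. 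Given such a $B$, I would define the cylinder
\[
U_B := \bigl\{\bold{x} \in \Delta : \varphi(T_C^j \bold{x}) = B_{j+1},\ j = 0,1,\ldots,K-1\bigr\},
\]
which has positive Lebesgue measure, because $\varphi^{-1}(\gamma_1)$ and $\varphi^{-1}(\gamma_2)$ each have positive Lebesgue measure and $T_C$ has piecewise-linear nonsingular branches, so every additional one-step constraint refines $\bold{x}$ to a positive-measure subset. Since $\nu_C \sim \mathrm{Leb}$, Birkhoff's ergodic theorem applied to $(T_C, \nu_C)$ and $\mathbf{1}_{U_B}$ then ensures that for $\nu_C$-a.e.\ $\bold{x}$, the orbit $\{T_C^n \bold{x}\}_{n \geq 0}$ visits $U_B$ at infinitely many times $n_0$. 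At each such $n_0$, $\tau_{[n_0+1,\,n_0+K]} = B$, so the choice $n_1 = n_0 + K_1$, $n_2 = n_0 + K_1 + K_2$, $n_3 = n_0 + K$ verifies hypotheses (a), (b), (c) of Theorem~\ref{thm:S-adic Boshernitzan} with $N = \max_i \|M_{B^{(i)}}\|$.

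The main obstacle is the construction of the universal word builder $B^{(2)}$. By Proposition~\ref{PROP:nat coding}, the subshifts in question are natural codings of $\TT^2$-translations, so they have linear factor-complexity and only a bounded set of length-$2$ factors in each tail language. However, certain pairs are incompatible with certain outermost substitutions: for instance, $(3,3)$ never appears as a factor of any $\gamma_1(w)$ because no image $\gamma_1(c)$ begins with $3$, while $(1,1)$ cannot appear in any $\gamma_2(w)$. Consequently a single $B^{(2)}$ covering all conceivable pairs may fail to exist. My fallback is to exhibit a \emph{finite family} $B_1, \ldots, B_m$ of candidate blocks such that every pair-set arising from a Cassaigne--Selmer language is covered by some $B_j^{(2)}$, apply the Birkhoff argument to each cylinder $U_{B_j}$ separately, and use the fact that $\nu_C$-a.e.\ orbit visits at least one $U_{B_j}$ infinitely often. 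Either way, the verification reduces to a finite — if somewhat delicate — combinatorial analysis of short compositions of $\gamma_1$ and $\gamma_2$ applied to the letters of $\calA$.
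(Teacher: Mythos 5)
Your overall architecture matches the paper's: fix a finite block of substitutions realizing hypotheses (a)--(c) of Theorem~\ref{thm:S-adic Boshernitzan}, observe that the corresponding cylinder in $\Delta$ has positive Lebesgue (hence $\nu_C$) measure because the inverse branches of $T_C$ act as nonsingular projective maps, and invoke Birkhoff's ergodic theorem to conclude that almost every orbit enters the cylinder infinitely often; the passage from $\TT^2_{\Delta}$ to $\TT^2$ via Remark~\ref{REM:T-Delta-to-T} is also as in the paper. Your positivity block is fine ($M_{(\gamma_1\gamma_2)^2\gamma_1} = C_1C_2C_1C_2C_1$ is indeed strictly positive), and the measure and ergodicity steps are sound up to a harmless index shift.

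The genuine gap is the one you flag yourself: you never produce the word builder $B^{(2)}$, and your fallback of a finite family is not obviously well posed, because the set of pairs that precede at level $n_1$ depends on the entire tail of the directive sequence beyond your fixed block, so you cannot a priori sort directive sequences by their ``pair-set'' independently of where the block is inserted. The resolution---and it is exactly the phenomenon you observe but read as an obstacle---is to build the restriction into the word builder itself. The paper takes $\tau' = \gamma_1^2\gamma_2\gamma_1\gamma_2^3\gamma_1$. Because $\tau'$ begins with $\gamma_1^2$, every word $\tau_{[n_1+1,n_1+m]}(c)$ with $m \geq 2$ has the form $\gamma_1^2(v)$ for some word $v$, and a short computation ($\gamma_1^2(1)=1$, $\gamma_1^2(2)=12$, $\gamma_1^2(3)=13$) shows that no concatenation of these images contains $22$, $23$, $32$, or $33$; together with the trivial case $m=1$, this forces every pair $ab$ that precedes at level $n_1$ to lie in $\{11,12,13,21,31\}$, \emph{whatever} the tail of the directive sequence is. One then checks that $\tau'(1) = 1213113$ contains each of these five pairs as a factor, so $\tau'$ is a word builder irrespective of its position. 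With this single block (the paper uses $\tau^\ast = (\gamma_1\gamma_2)^3\,\tau'\,(\gamma_1\gamma_2)^3$) your argument closes, and no finite family is needed.
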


\begin{proof}
It suffices to show that for $\nu$-almost every $\bold{x} \in \Delta$, the subshift $(X(\boldphi(\bold{x})),S)$ satisfies Boshernitzan's criterion. Note that $\tau = \gamma_1 \circ \gamma_2$ is a primitive substitution, indeed $M_{\tau}^3$ is positive.

Further we claim that the substitution $\tau' = \gamma_1^2 \gamma_2 \gamma_1 \gamma_2^3 \gamma_1$ is a word builder, irrespective of its position within a directive sequence $(\tau_n)_{n=0}^{\infty} \in \{ \gamma_1, \gamma_2 \}^{\N_0}$. To verify this, we first observe that the set $\gamma^2_1 (\mathcal A^2)$ does not contain any of the words in $\{22,23,32,33\}$ as a subword. Hence, whenever $\tau' = \tau_{[n+1,n+8]}$ and $a$ precedes $b$ at level $n$, it follows that $ab \in \mathcal L_2 := \{11,12,13,21,31\}$. A direct calculation yields that $\tau'(1) = 1213113$ and so for all $ab \in \mathcal L_2$ we find that $ab \triangleleft \tau'(1)$. In particular, $\tau'$ is a word builder.
The substitution $\tau^{\ast} = \tau^3 \tau' \tau^3$ is a composition of $\ell = 14$ substitutions drawn from $\{\gamma_1, \gamma_2\}$.
Let
\[
B_m = \{ (\tau_n)_{n \in \N_0} \in \{ \gamma_1, \gamma_2\}^{\N_0} : \tau_{m} \circ \ldots \circ \tau_{m + \ell -1} = \tau^{\ast} \}
\]
and $B = \limsup_{m \to \infty} B_m$. By Theorem~\ref{thm:S-adic Boshernitzan}, for every $\boldtau \in B$, the corresponding subshift $(X(\boldtau),S)$ satisfies Boshernitzan's criterion. Hence, it is enough to show that $\mu = \nu \circ \boldphi^{-1}$ assigns full measure to $B$. We consider the set
\begin{align*}
D & = \boldphi^{-1} (B_0) = \{ \bold{x} \in \Delta : A(\bold{x}) \cdots A(T^{ \ell - 1}\bold{x}) = M_{\tau^{\ast}} \}
%\\ &= \bigcap_{k=0}^{\ell-1} T^{-k}( \Delta(1) \cap T^{-1} \Delta(2)).
\end{align*}
Since the map $\boldphi$ conjugates $T$ and $S$, we have that
\[
\boldphi^{-1}(B_m) = \boldphi^{-1}(S^{-m}B_0) = T^{-m} \boldphi^{-1}(B_0) = T^{-m} D
\]
for all $m \in \N_0$. By Birkhoff's ergodic theorem, we have for almost every $\bold{x} \in \Delta$ that
\[
\lim_{n \to \infty} \frac{1}{n} \sum_{m = 0}^{n-1} \mathbf{1}_{\boldphi^{-1}(B_m)}(\bold{x}) =
\lim_{n \to \infty} \frac{1}{n} \sum_{m = 0}^{n-1} \mathbf{1}_{D}(T^m \bold{x}) = \nu(D).
\]
If $\nu(D) >0$, we therefore conclude that almost-every $\bold{x}$ is contained in infinitely many $\boldphi^{-1}(B_m)$ and hence in $\boldphi^{-1}(B)$, implying $\nu(\boldphi^{-1}(B)) =1$.
It remains to show that $\nu(D) > 0$. %\marginpar{JC: I suggest introducing new notation for the projective action of our matrices $C_1$ and $C_2$ and using this instead of ``radial projection". I dont feel too strongly.}

 Let $\Delta(1) = \{ x \in \Delta : x_1 \geq x_3 \}$ and $\Delta(2) = \{ x \in \Delta : x_3 > x_1 \}$, that is, \[A(\bold{x}) = C_i \iff \bold{x} \in \Delta(i).\]
%\marginpar{JC:What is $\Delta(i)$? I couldnt find the definition.}
In the following, we identify sets that coincide up to a set of Lebesgue measure zero---this applies in particular to the boundaries of the sets $\Delta$, $\Delta(1)$ and $\Delta(2)$.
Since $T(\Delta(i)) = \Delta$ and $T$ acts on $\Delta(i)$ as the radial projection of $C_i^{-1}(\Delta(i))$ to $\Delta$, we obtain that the radial projection of $C_i(\Delta)$ to $\Delta$ coincides with $\Delta(i)$.
%(up to points on the boundary).
Abusing notation slightly, we use $C_i$ to also denote the projective action of $C_i$ on $\Delta$.
With this convention, it is straightforward to check that $A(\bold{x}) = C_i$ if and only if $\bold{x} \in C_i(\Delta)$
%\marginpar{JF: Up to points on the boundary? Should we make a general disclaimer?}
(note that here we could also replace $\Delta$ with the positive cone).
Similarly, one has $A(\bold{x}) A(T \bold{x}) = C_i C_j$ precisely if $\bold{x} \in C_i(\Delta)$ and $T \bold{x} \in C_j(\Delta)$, where $T = C_i^{-1}$ in this case. That is, we have equivalence to $\bold{x} \in C_i(\Delta)$ and $\bold{x} \in C_i C_j(\Delta)  \subset C_i(\Delta)$. Inductively, we find that $A(\bold{x}) \cdots A(T^{k} \bold{x}) = C_{i_0} \cdots C_{i_k}$ if and only if
$
\bold{x} \in C_{i_0} \cdots C_{ i_k} (\Delta).
$
For our case at hand, we obtain that
$x \in D$ if and only if $\bold{x} \in M_{\tau^{\ast}} (\Delta)$. Note that, as  $M_{\tau^{\ast}}$ is primitive, it acts as a projective contraction on the positive cone.
Since each of $C_1, C_2$ is invertible, so is $M_{\tau^{\ast}}$ and the set $M_{\tau^{\ast}} (\Delta)$ has positive Lebesgue measure. It follows that the Lebesgue measure (and hence the $\nu$-measure) of $D$ is positive.
Finally, to go from $\alpha \in \TT^2_{\Delta}$ to more general $\alpha \in \TT^2$, we make use of Remark~\ref{REM:T-Delta-to-T}.
\end{proof}

\section{Proof of Theorem~\ref{t.mfqpsoapplication}} \label{sec:mainproof}

In this section we derive Theorem~\ref{t.mfqpsoapplication} from our work in the previous sections. Let us begin with a discussion of elementary functions on $\TT^d$ and how they relate to locally constant functions on $(X,S)$, where $(X,S)$ is a natural coding of $R_\alpha$ associated with the natural measurable partition $\{\mathcal{F}_1,\ldots,\mathcal{F}_h\}$.  We define $\eta:X \to \TT^d$ by $\eta(x) = \omega$, where $\omega$ is the unique point in
\[\bigcap_{n \in \N} \overline{\bigcap_{k=0}^{n} R_\alpha^{-k} \mathring{\mathcal F}_{x_k}}.\]
Let \[\mathcal{G} = \bigcap_{k \in \Z} R_\alpha^{-k}\left[\bigcup_{j=1}^h \mathring{\mathcal{F}}_j\right],\]
which is a dense $G_\delta$ set of full Lebesgue measure in $\TT^d$ (by definition of natural coding).
For $\omega \in \mathcal{G}$, we can invert this by mapping $\omega$ to $x = (x_k)_{k\in\Z}$ given by $R_\alpha^k \omega \in \mathring{\mathcal{F}_{x_k}}$.

Given $w = w_0 \cdots w_n \in L(X)$, let
\[ \mathcal{F}_w =\bigcap_{k=0}^n R_\alpha^{-k} \mathcal{F}_{w_k},\]
which is nonempty by the definition of $L(X)$.
Let $\chi_{w}$ denote the characteristic function of $\mathcal{F}_w$, and let $\mathscr{A}$ denote the algebra generated by $\{\chi_{w}: w \in L(X)\}$.

\begin{prop} \label{prop:locconstAmple}
If $(X,S)$ is a natural coding of $R_\alpha$, then $\mathscr{A}$ is ample. In particular, $\mathscr{A}\setminus\{\mathrm{constants}\}$ is ample as well.
\end{prop}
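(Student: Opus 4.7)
The plan is to show directly that every $f \in C(\TT^d)$ lies in the $L^\infty$-closure of $\mathscr{A}$, by approximating $f$ pointwise (up to a measure-zero set) by a step function supported on cylinders $\mathcal{F}_w$ of fixed large length. Given $\varepsilon>0$, uniform continuity of $f$ yields $\delta>0$ with $d(\omega,\omega')<\delta \Rightarrow |f(\omega)-f(\omega')|<\varepsilon$. For $x \in X$ and $N \in \N$, let
\[
E_N(x) := \overline{\bigcap_{k=0}^{N} R_\alpha^{-k} \mathring{\mathcal{F}}_{x_k}}.
\]
Assuming the uniform shrinking statement $\sup_{x \in X} \mathrm{diam}(E_N(x)) \to 0$ as $N \to \infty$ (handled below), I would choose $N$ making this supremum less than $\delta$. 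For each $w \in L(X)$ with $|w|=N+1$, pick a point $\omega_w \in \bigcap_{k=0}^N R_\alpha^{-k} \mathring{\mathcal{F}}_{w_k}$, nonempty by the definition of $L(X)$, and set
\[
g := \sum_{|w|=N+1,\, w \in L(X)} f(\omega_w)\,\chi_{\mathcal{F}_w} \in \mathscr{A}.
\]
For $\omega \in \mathcal{G}$ outside the measure-zero set $\bigcup_k R_\alpha^{-k}\bigcup_j \partial \mathcal{F}_j$, with coding $x$, the relation $R_\alpha^k\omega \in \mathring{\mathcal{F}}_{x_k}$ excludes $\omega$ from every $\mathcal{F}_w$ with $|w|=N+1$ other than $w = x_0\cdots x_N$ (since $\mathring{\mathcal{F}}_j \cap \mathring{\mathcal{F}}_{j'}=\emptyset$ for $j\ne j'$ by the partition having zero-measure pairwise intersections, and avoiding the boundaries forces $\omega \notin \mathcal{F}_{w_k}$ whenever $w_k \neq x_k$). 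Thus $g(\omega) = f(\omega_{x_0\cdots x_N})$, and as both $\omega$ and $\omega_{x_0\cdots x_N}$ lie in $E_N(x)$ of diameter $<\delta$, $|f(\omega)-g(\omega)|<\varepsilon$; this gives $\|f-g\|_{L^\infty} \leq \varepsilon$.

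The step I expect to be the main obstacle is the uniform shrinking statement itself. Pointwise it is immediate from the natural coding hypothesis: for each $x$, the sequence $(E_N(x))_{N \geq 0}$ is a decreasing family of compact subsets of the compact metric space $\TT^d$ whose intersection is the singleton $\{\eta(x)\}$, forcing $\mathrm{diam}(E_N(x)) \to 0$. To upgrade to uniform convergence, I would argue by contradiction: if $\mathrm{diam}(E_N(x^{(N)})) \geq \delta$ for some $\delta>0$ and some sequence $x^{(N)} \in X$, extract a convergent subsequence $x^{(N)} \to x^* \in X$ by compactness of $X$. Since $E_N(x)$ is monotone non-increasing in $N$ and depends only on $x_0,\ldots,x_N$, for every fixed $J$ one has $E_N(x^{(N)}) \subseteq E_J(x^{(N)}) = E_J(x^*)$ once $N \geq J$ is large enough that $x^{(N)}$ agrees with $x^*$ on indices $0,\ldots,J$. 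Hence $\mathrm{diam}(E_J(x^*)) \geq \delta$ for every $J$, contradicting pointwise shrinking at $x^*$.

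For the ``in particular'' assertion, every constant $c \in \R$ is a uniform limit of non-constant elements of $\mathscr{A}$: the alphabet size $h$ must satisfy $h \geq 2$ (otherwise the coding map $\eta$ is a single point, incompatible with minimality of $R_\alpha$ on $\TT^d$), so there is $a \in \{1,\ldots,h\}$ with $\mathcal{F}_a$ of positive measure and positive-measure complement, and then $c + \tfrac{1}{n}\chi_{\mathcal{F}_a} \in \mathscr{A}\setminus\{\mathrm{constants}\}$ converges to $c$ uniformly. Combined with the first two paragraphs, this shows that $C(\TT^d)$ sits in the $L^\infty$-closure of $\mathscr{A} \setminus \{\mathrm{constants}\}$, so this set is ample as well.
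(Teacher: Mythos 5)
Your proof is correct and follows essentially the same route as the paper: uniform continuity of $f$, then approximation by a step function $\sum_{|w|=n} a_w \chi_{w}$ over cylinders of a fixed length chosen so their diameters are below $\delta$. The only difference is that you supply details the paper leaves implicit — the compactness argument upgrading pointwise to uniform shrinking of the cylinder diameters, the measure-zero bookkeeping at the boundaries, and the perturbation handling the ``in particular'' clause — all of which are fine.
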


\begin{proof}
Given $f \in C(\TT^d)$ and $\varepsilon>0$, find $\delta > 0$ so that $|f(\theta_1)-f(\theta_2)|<\varepsilon$ whenever $\mathrm{dist}(\theta_1,\theta_2)<\delta$. Choose $n$ large enough that for any $w \in L(X)$ of length $n$, $\mathrm{diam}(\mathcal{F}_w)< \delta$, and define
\[ g = \sum_{\substack{w \in L(X) \\ |w| =n}} a_w \chi_{w} \]
where $a_w = f(\theta)$ for some $\theta \in \mathcal{F}_w$. Clearly $g \in \mathscr{A}$ and $\|f - g\|_\infty < \varepsilon$.
\end{proof}

\begin{proof}[Proof of Theorem~\ref{t.mfqpsoapplication}]
We consider the full measure set of $\alpha$'s in $\TT^2$ that generate a minimal translation $R_\alpha : \TT^2 \to \TT^2$ and belong to the full measure set determined earlier; compare Proposition~\ref{PROP:T^2-codings}.% and \ref{PROP:T^3-codings}
%\marginpar{Jon: Add a few words clarifying why the subshift is not periodic.}

By these propositions, the minimal translation $R_\alpha$ admits a natural coding that satisfies the Boshernitzan criterion.  As $R_\alpha^k$ is minimal and for any $f \in \mathscr{A}$ has that its level sets have non-empty interior, the $V_x$ are all aperiodic. Thus, by Theorem~\ref{t.BimpliesZM}, every non-constant locally constant sampling function on this subshift generates a potential so that the associated Schr\"odinger spectrum is a Cantor set of zero Lebesgue measure.

Since the coding is natural, each such locally constant function on the subshift corresponds to an elementary function on the torus and the set of functions obtained via this correspondence is ample by Proposition~\ref{prop:locconstAmple}.
This concludes the proof of the theorem.
\end{proof}

\section{A Discussion of Possible Extensions to Higher Dimensions} \label{sec:higher}

\subsection{A Road Map to Treating Larger Values of $d$}

Proposition \ref{PROP:nat coding} is a significant new result that enabled this project and it is natural to wonder how general it is. The plan for such a result is fairly general.
\begin{enumerate}
\item One finds a continued fraction algorithm and obtains S-adic systems from the process applied to a.e.\ vector in the parameter space.
\item One shows that the resulting shift dynamical systems (a.s.) have purely discrete spectrum, and in fact they are measurably isomorphic to a toral rotation and moreover are natural codings thereof.
%\item One shows that one can obtain a natural coding of the appropriate toral rotation.
\end{enumerate}
Step (2) requires
\begin{itemize}
\item An absolutely continuous ergodic invariant measure.
\item The negativity of the second Lyapanov exponent (of the cocycle that gives the S-adic system) with respect to the absolutely continuous invariant measure.
\item A mild additional assumption. For example either of the following two suffices.
\begin{itemize}
\item As in \cite[Theorem B]{FN20} it %satisfies the \emph{Pisot condiiton} (\cite[Definition 60]{FN20}) and
has a \emph{seed point} (\cite[Definition 64]{FN20}) and the second Lyapanov exponent is simple (this is part of the \emph{Pisot condition} \cite[Definition 60]{FN20} in this paper).
\item As in \cite[Theorem 3.1]{BST20}) it has a \emph{periodic Pisot point} (\cite[Definition 2.4]{BST20}) with \emph{positive range} (\cite[Definition 2.5]{BST20}) so that the corresponding S-adic system (which in this case is a substitution dynamical system) has discrete spectrum.
\end{itemize}
\end{itemize}

There are standard approaches to the ergodicity of these algorithms. For example, one can relate the continued fraction algorithm to a flow that is known to be ergodic (see, e.g., \cite{AN93}) or one can show that it or an acceleration satisfies some well known conditions (see, e.g., \cite[Theorem 8]{Schw00}).

The negativity of the second Lyapanov exponent in dimension greater than two is shown via computer assisted proof in Hardcastle \cite{Har02}; see also Berth\'e--Steiner--Thuswaldner \cite{BST2020MathOfComp}.

There is a general strategy \cite{HK00}, but the rigor of these implementations even in dimension 3 is not always complete \cite{Har02}. For the Cassaigne-Selmer algorithm, one can appeal to the 2 dimensional Selmer algorithm (which it is conjugate to) and quote \cite{Schw04} (which appeals to \cite{Schw01} where the result is proven for the closely related Baldwin algorithm) for a proof without computer assistance.

\subsection{A Brief Discussion of the Case $d = 3$}

For translations on $\TT^3$, the $4$-dimensional Brun algorithm is a natural candidate for the strategy outlined above, and \cite[Section 6.4]{BST20} collects (most of) the necessary inputs.
An analogue of Proposition~\ref{PROP:nat coding} for the Brun algorithm requires one to verify that the second Lyapunov exponent related to the cocycle induced by $A$ on $\Delta$ is negative.  The negativity of the second exponent is unclear to us. In particular \cite{Har02} experimentally studies this question but is not entirely rigorous.\footnote{``Note that I use the term ``proof" here, despite the fact that I do not attempt to control round-off errors. I will leave the issue of whether the term ``proof" is appropriate to the individual reader." \cite[Page 132 bottom of left hand side]{Har02}.} The other assumptions of \cite[Theorem 3.1]{BST20} are verified in the paragraph before \cite[Theorem 6.7]{BST20}.
 The result in \cite[Theorem 6.7]{BST20} states the following.

\begin{prop}
\label{PROP:Brun-coding}
Let $\boldphi$ be the substitutive realization of the Brun algorithm. For $\nu_B$-almost every $\bold{x} \in \Delta$, the subshift $(X({\boldphi(\bold{x})}),S)$ is a natural coding of $(\TT^3, R_{ \pi( \bold{x} ) })$.
\end{prop}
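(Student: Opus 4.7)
The plan is to execute the two-step recipe laid out in Subsection 6.1 of the excerpt, with the Brun algorithm on $\Delta = \Delta_4$ replacing the Cassaigne--Selmer algorithm. Step (1) is already in place: we have $T_B$ together with the ergodic invariant measure $\nu_B$ equivalent to Lebesgue measure on $\Delta$, and a substitution selection $\varphi(\bold{x}) = \beta_{ij}$ for $\bold{x} \in \Delta(i,j)$. This gives the substitutive realization $\boldphi(\bold{x}) = (\varphi(T_B^n \bold{x}))_{n=0}^{\infty}$ and, for $\nu_B$-a.e.\ $\bold{x}$, a well-defined S-adic subshift $(X(\boldphi(\bold{x})),S)$ with directive sequence drawn from the finite set $\{\beta_{ij} : i \neq j \in \calA\}$.

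Step (2) would be to invoke \cite[Theorem~3.1]{BST20} to conclude that this subshift is measurably isomorphic to $(\TT^3, R_{\pi(\bold{x})})$ and, more strongly, that it furnishes a natural coding thereof. Three hypotheses need to be verified: (a) existence of an absolutely continuous ergodic $T_B$-invariant probability measure; (b) strict negativity of the second Lyapunov exponent of the cocycle induced by $A$ with respect to $\nu_B$; and (c) the existence of a periodic Pisot point with positive range whose associated substitution dynamical system has pure discrete spectrum. Condition (a) is already secured via $\nu_B$. For (c), one would locate a short periodic $T_B$-orbit, form the associated product of Brun matrices $M_{\beta_{ij}}$, verify the Pisot property of its spectrum together with positive range, and check pure discrete spectrum of the corresponding substitutive system (e.g.\ via the balanced pair algorithm or a geometric coincidence condition). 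As the excerpt notes, this follows the pattern discussed in the paragraph preceding \cite[Theorem~6.7]{BST20}.

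The main obstacle, and the one the authors explicitly flag, is (b): a rigorous proof of negativity of the second Lyapunov exponent. My first attempt would be to turn Hardcastle's numerical experiment \cite{Har02} into an honest computer-assisted proof using validated interval arithmetic. Concretely, one would subdivide $\Delta_4$ into finitely many cells on which $A$ is constant, propagate rigorous interval enclosures for the cocycle under iterates of $T_B$, and verify a uniform contraction estimate on the projectivization of the subspace transverse to the Oseledets direction corresponding to the top exponent. Since $T_B$ is piecewise projective-linear on the cells $\Delta(i,j)$ with integer matrices $M_{\beta_{ij}}$, all the required arithmetic can in principle be carried out in exact rational arithmetic, eliminating the round-off concerns mentioned in \cite[p.~132]{Har02}.

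A more conceptual, non-computational route would be to construct an explicit strictly invariant cone field transverse to the Perron direction on which the projectivization of the inverse cocycle is uniformly contracting, in analogy with classical arguments for the Jacobi--Perron algorithm. Either way, once (a), (b), and (c) are in hand, \cite[Theorem~3.1]{BST20} yields the desired natural coding property for $\bold{x}$ in a $\nu_B$-full measure subset of $\Delta_4$. Passage from $\pi(\bold{x}) \in \TT^3_{\Delta}$ to general $\alpha \in \TT^3$ can then be handled by the obvious $d=3$ analogue of Remark~\ref{REM:T-Delta-to-T}, completing the proof.
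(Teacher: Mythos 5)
Your overall route --- verify hypotheses (a), (b), (c) of \cite[Theorem~3.1]{BST20} and then invoke that theorem --- is exactly the route the paper describes, and indeed the paper simply quotes Proposition~\ref{PROP:Brun-coding} as the statement of \cite[Theorem~6.7]{BST20}, noting that the hypotheses other than the Lyapunov one are checked in the paragraph preceding that theorem. But there is a genuine gap, and it is the one you yourself flag: hypothesis (b), the strict negativity of the second Lyapunov exponent of the Brun cocycle with respect to $\nu_B$, is not established by your argument. You offer two plans --- upgrading Hardcastle's computation \cite{Har02} to a validated computer-assisted proof, or constructing a strictly invariant contracted cone field transverse to the Perron direction --- but neither is carried out, and neither is routine. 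The cone-field idea faces a real obstruction: negativity of the second exponent concerns the induced action on the quotient by the top Oseledets direction, where the Brun matrices admit no obvious invariant cone and no uniform contraction is known; the statement, if true, is genuinely measure-theoretic (an integral against the invariant density being negative), which is precisely why only numerical evidence exists (see also \cite{BST2020MathOfComp}). Your remark that exact rational arithmetic ``eliminates round-off concerns'' does not resolve this either, since the quantity to be controlled is an almost-everywhere limit, equivalently an integral over $\Delta_4$, not a finite exact computation along a single orbit.

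You should also be aware that the paper does not prove this proposition. It explicitly describes the status of Proposition~\ref{PROP:Brun-coding} as indeterminate precisely because of point (b), and for that reason states the downstream $d=3$ result only as Conjecture~\ref{CONJ:T^3-codings} rather than as a theorem. So your identification of where the difficulty lies is well placed, but as written your proposal is a research program rather than a proof; completing step (b) rigorously would itself be a substantial contribution.
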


Given the indeterminate status of Proposition~\ref{PROP:Brun-coding}, we regard the following problem as an interesting question for future study.

\begin{conj}\label{CONJ:T^3-codings}
For almost every $\alpha \in \TT^3$, the toral translation $(\TT^3, R_\alpha)$ admits a natural coding that satisfies Boshernitzan's criterion.
\end{conj}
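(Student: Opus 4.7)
Granting Proposition~\ref{PROP:Brun-coding}, the strategy is to mirror the proof of Proposition~\ref{PROP:T^2-codings} with the four-dimensional Brun algorithm $T_B$ replacing the Cassaigne--Selmer map $T_C$, the simplex $\Delta_4$ replacing $\Delta_3$, and the alphabet $\{1,2,3,4\}$ replacing $\{1,2,3\}$. The goal is to exhibit a single composition $\tau^{\ast}=\beta_{i_1 j_1}\cdots\beta_{i_\ell j_\ell}$ of Brun substitutions of some fixed length $\ell$ that, wherever inserted into a directive sequence drawn from $\{\beta_{ij}\}_{i\neq j}$, simultaneously (a) has a positive substitution matrix on a prefix and on a suffix (each of bounded norm), and (b) contains a middle factor that is a word builder at the corresponding level. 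Given such a $\tau^{\ast}$, Theorem~\ref{thm:S-adic Boshernitzan} immediately yields Boshernitzan's criterion for every directive sequence in which $\tau^{\ast}$ appears infinitely often as a block. The Birkhoff-type reduction used in Proposition~\ref{PROP:T^2-codings} then transfers verbatim: the set $D=\{\bold{x}\in\Delta_4: A(\bold{x})\cdots A(T_B^{\ell-1}\bold{x})=M_{\tau^{\ast}}\}$ coincides with $M_{\tau^{\ast}}(\Delta_4)$, which has positive Lebesgue (hence $\nu_B$) measure because each $\beta_{ij}$ is unimodular, making $M_{\tau^{\ast}}$ invertible and a projective contraction of the positive cone. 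Finally, Remark~\ref{REM:T-Delta-to-T} together with \cite[Rem.~3.5]{BST20} lifts the conclusion from $\alpha\in\TT^3_{\Delta}$ to full-measure $\alpha\in\TT^3$.

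\textbf{The combinatorial core.} The decisive technical step is engineering the middle block, $\tau_0$ say, as a word builder. Because each $\beta_{ij}$ maps $j\mapsto ij$ and fixes every other letter, a single $\beta_{ij}$ introduces essentially only the two-letter word $ij$ into images. Consequently, a word builder on the four-letter alphabet must be a relatively long product of $\beta_{ij}$'s designed so that, for every pair $ab$ that can precede at the level in question, some letter $c\in\{1,2,3,4\}$ satisfies $ab\triangleleft\tau_0(c)$. A natural recipe is to cycle through all ordered pairs $(i,j)$ with $i\neq j$---there are twelve of them---arranging the $\beta_{ij}$'s so that, after the full pass, every admissible two-letter word is realized as a subword of the image of some letter; one then sandwiches this block between two short primitive blocks, whose existence reduces to an explicit verification that some finite product of $\beta_{ij}$'s has strictly positive matrix. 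A subtlety specific to Brun is that the relation ``$a$ precedes $b$ at level $n$'' depends on the tail of the directive sequence past level $n$, so one should work with the \emph{universal} admissible set---those two-letter words realizable irrespective of the tail---and ensure $\tau_0$ builds every member of that set.

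\textbf{Main obstacle.} The word-builder construction is a finite combinatorial exercise on a four-letter alphabet and, while messier than the two-letter case, should be tractable; I do not expect it to be a genuine obstruction. The real hurdle lies upstream: Proposition~\ref{PROP:Brun-coding} is itself currently conditional on the negativity of the second Lyapunov exponent of the Brun cocycle on $\Delta_4$ with respect to $\nu_B$, a statement presently supported only by numerical evidence (cf.\ \cite{Har02}). Any fully unconditional resolution of Conjecture~\ref{CONJ:T^3-codings} along this route therefore requires either a rigorous proof of this Lyapunov estimate or an alternative four-dimensional continued fraction algorithm whose natural-coding property can be established without appealing to Lyapunov exponents. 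Modulo that upstream input, however, the Birkhoff-plus-word-builder scheme above should deliver the conjecture by essentially the same path used for $d=2$.
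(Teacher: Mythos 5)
Your overall route is the one the paper itself sketches: the statement is left as a conjecture precisely because Proposition~\ref{PROP:Brun-coding} currently rests on the unverified negativity of the second Lyapunov exponent of the Brun cocycle, and you identify that upstream dependence correctly. Granting that proposition, your plan --- find a fixed block $\tau^{\ast}$ of Brun substitutions whose occurrence as a factor guarantees the hypotheses of Theorem~\ref{thm:S-adic Boshernitzan}, show that $\boldphi^{-1}(B_0) = M_{\tau^{\ast}}(\Delta_4)$ has positive measure, run Birkhoff, and lift from $\TT^3_{\Delta}$ to $\TT^3$ via Remark~\ref{REM:T-Delta-to-T} --- is exactly the paper's.

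The one place where your proposal is genuinely off is the combinatorial core. You propose to ``cycle through all twelve $\beta_{ij}$'' and to target the ``universal admissible set --- those two-letter words realizable irrespective of the tail.'' Read literally (the intersection over tails), that is the wrong set: a word builder at level $n$ must realize every pair that precedes for the \emph{actual} tail, so a block that works irrespective of its position must cover the \emph{union} over tails, not the intersection --- and without further structure that union is essentially all of $\calA^2$ (e.g.\ $11 \triangleleft (\beta_{12}\circ\beta_{21}\circ\beta_{12})(2) = 12112$), which your cycling recipe does not obviously produce inside the image of a single letter. The paper's device is not to enlarge the block until it builds every conceivable pair, but to \emph{shrink} the set of pairs that can precede by choosing the initial substitutions of the block itself: since ``precedes at level $n$'' depends only on $\tau_{n+1}, \tau_{n+2}, \ldots$, taking $\tau_{[n+1,n+3]} = \beta_{14}\circ\beta_{13}\circ\beta_{12}$ forces every letter's image to be one of $1, 12, 13, 14$ --- each beginning with $1$ --- so the only pairs that can precede at level $n$ are $\{11,12,13,14,21,31,41\}$, regardless of the tail. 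One then checks that $\beta_{14}\circ\beta_{13}\circ\beta_{12}\circ\tau^2$, with $\tau = \beta_{12}\circ\beta_{23}\circ\beta_{34}\circ\beta_{41}$ primitive and $M_{\tau}^2$ positive (which also supplies your sandwiching positive blocks), realizes all seven of these pairs. This is the same trick as in the two-dimensional case, where the prefix $\gamma_1^2$ excludes $\{22,23,32,33\}$. With that correction your argument coincides with the paper's, and the only remaining gap is the Lyapunov-exponent input you already flagged.
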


%{\marginpar{PG: Maybe remove the following paragraph - without having a word builder, it is even more speculative than before.}
The idea of proof of Conjecture~\ref{CONJ:T^3-codings} relies on Proposition~\ref{PROP:Brun-coding} and thereby on the question whether the second Lyapunov exponent associated to the Brun algorithm is indeed negative---compare the discussion in the previous subsection.  Assuming Proposition~\ref{PROP:Brun-coding}, we can prove Conjecture~\ref{CONJ:T^3-codings} following the same lines as for Proposition~\ref{PROP:T^2-codings}. Here we make use of the observation that the substitution $\tau = \beta_{12} \circ \beta_{23} \circ \beta_{34} \circ \beta_{41}$ is primitive, which can be seen from a direct calculation (indeed $i \triangleleft \tau^2(j)$ for every $i$ and $j$); compare the discussion in \cite{BST20} preceding Theorem~6.7. With $\bold{x}$ the right Perron Frobenius eigenvector of $M_{\tau}$, we have $\phi(\bold{x}) = \tau^{\infty}$. A word builder can be constructed as follows. If $\tau_{[n+1, n+3]} = \beta_{14} \circ \beta_{13} \circ \beta_{12}$, then $a$ can precede $b$ at level $n$ only if $ab \in \{11,12,13,14,21,31,41\}$. From this we can verify that $\beta_{14} \circ \beta_{13} \circ \beta_{12} \circ \tau^2$ is a word builder, irrespective of its position in a sequence $(\tau_n)_{n \in \N_0}$.

\end{document}